\newtheorem{thm}{Theorem}[section]
\newtheorem{lem}{Lemma}[section]
\newtheorem{notation}[thm]{Notation}
\begin{document}
\title[Tate tame symbol and the joint torsion of commuting operators]{Tate tame symbol and the joint torsion of commuting operators}

\author{Jens Kaad}
\address{International School of Advanced Studies (SISSA),
Via Bonomea 265,
34136 Trieste,
Italy}
\email{jenskaad@hotmail.com}

\author{Ryszard Nest}
\address{Department of Mathematical Sciences,
Universitetsparken 5,
DK-2100 Copenhagen {\O},
Denmark}
\email{rnest@math.ku.dk}
%
%
%
\thanks{2010 \emph{Mathematical Subject Classification}: Primary: 47A13, 32A10; Secondary: 32C15, 19C20, 19K56.}
\thanks{\emph{Keywords and phrases}: Determinant functors, Koszul complexes, holomorphic functional calculus, joint torsion, tame symbols.\\}
\thanks{The second author is partially supported by the Danish National Research Foundation (DNRF) through the Centre for Symmetry and Deformation.}

\begin{abstract}
We investigate determinants of Koszul complexes of holomorphic functions of a commuting tuple of bounded operators acting on a Hilbert space. Our main result shows that the analytic joint torsion, which compares two such determinants, can be computed by a local formula which involves a tame symbol of the involved holomorphic functions. As an application we are able to extend the classical tame symbol of meromorphic functions on a Riemann surface to the more involved setting of transversal functions on a complex analytic curve. This follows by spelling out our main result in the case of Toeplitz operators acting on the Hardy space over the polydisc.
\end{abstract}

\maketitle
\tableofcontents
\section{Introduction}

The main theme of this series of papers (began with \cite{KaNe12} and \cite{KaNe14}) is the study of the algebraic structure of a Hilbert space $\sH$ as a module over an algebra of holomorphic functions of a finite family of bounded commuting operators on $\sH$. 

The simplest example is the case of the Toeplitz operator $T_z$ acting on the Hardy space over the open unit disc, $H^2({\mathbb D})$. Here $z$ denotes the complex coordinate in $\cc$. The holomorphic functional calculus of bounded operators furnishes the Hilbert space $H^2({\mathbb D})$ with the structure of a module over the ring $\C{O}(\ov{\B D})$  of germs of functions holomorphic in a neighbourhood of the closed unit disc, and analysis of Toeplitz operators is closely related to the algebraic properties of this module structure. 

The prototype of results of the type we are interested in are as follows.

\medskip

\noindent{\bf Theorem (Fritz Noether)}
\noindent{\it
Given a holomorphic function $f$ on a neighbourhood of $\mathbb D$ and invertible on its boundary $\partial {\mathbb D}$, the Toeplitz operator $T_f$ is Fredholm and its index is given by (minus) the winding number of $f$,
\begin{equation}\label{eq:Noether}
\T{Ind}(T_f)=-\frac{1}{2\pi i}\int_{\mathbb{T}}f^{-1}df=-\sum_{|\la |<1} \T{res}_{\la }( f^{-1}df).
\end{equation}}

A way of interpreting this result is to notice that the left hand side of this equality is an {\it analytic object}, the $L^2$-Euler characteristic of the complex
\[
\xymatrix{0\ar[r]&\ar[r]^{T_f}\sH&\sH\ar[r]&0,
}
\]
while the right hand side has an algebraic $K$-theory interpretation. In fact, let $\sK$ denote the field of fractions of $\C{O}(\ov{\B D})$. Then an $f\in \C{O}(\ov {\B D})$ whose restriction to the unit circle is invertible determines an element $[f]\in K_1^{\T{alg}}(\sK)$ and, in this situation, the right hand side of equation (\ref{eq:Noether}) becomes the residue of $\log f$, which should be thought of as the result of the composition of the Chern character map from algebraic K-theory of $\sK$ to the {\it Deligne cohomology $H^*_{\C D} (\mathbb{D}\sem S)$} (where $S \su \B D$ is a finite set)
\[
K_1^{\T{alg}}({\sK})\ni [f]\rightarrow [\log f]\in H^1_{\C D}(\mathbb{D}\sem S,\zz(1)),
\]
with the residue map (see for example \cite{Dcoh})\label{ref. Dcoh}.

While this particular case does not really justify the reference to algebraic K-theory, the corresponding computations in the case of $K_2^{\T{alg}}$ become much more demanding. In their work on algebraic K-theory of the quotient of the algebra of bounded operators by the ideal of trace class operators, Carey and Pincus introduced an analytic object, the joint torsion $\T{JT}(A,B)$ of a pair of commuting Fredholm operators $A$ and $B$. In the particular case of a pair of Toeplitz operators $T_f,\ T_g$ on the Hardy space $H^2(\mathbb D)$, they proved the following theorem:

\medskip

\noindent{\bf Theorem (Carey, Pincus)}
\noindent{\it Suppose that both $T_f$ and $T_g$ are Fredholm. Then
\begin{equation}\label{eq:C-P formula}
\T{JT}(T_f,T_g)=\prod_{\la\in \mathbb D} (-1)^{m_\la(g) \cd m_\la(f)} \lim_{z \to \la} \frac{g(z)^{m_\la(f)}}{f(z)^{m_\la(g)}} \in \cc^*
\end{equation}
where $m_\la(f)$ is the multiplicity of a zero of $f$ at $\la$.} 

\medskip

The left hand side of this equality is a certain {\it analytic} invariant of the pair of operators $(T_f,T_g)$, while  the right hand side is again best understood via evaluating the Chern character, this time on the Steinberg symbol $[f,g]\in K_2^{alg}(\sK)$. The Chern character provides a class 
\[
[\log f]\cup[\log g ]\in H^2_{\C D}(\mathbb{D} \sem S,\zz(2))
\]
and the right hand side of the equation (\ref{eq:C-P formula}) becomes the exponential of the residue of $[\log f d(\log g)]$ (see f. ex. \cite{ES1}). This exponential is also known as the Tate tame symbol of the class $[f,g]\in K^{\T{alg}}_2 (\sK)$ (see \cite{Tate}, \cite{DeligneTS} ).

\medskip 

The subject of this paper is the generalization of these results to the case of a commuting n-tuple $A = (A_1,\ldots,A_n)$  of bounded operators on a Hilbert space $\sH$. The basic idea is to replace the single Toeplitz operator $T_z$ by the Koszul complex $K(A,\sH)$ of our arbitrary n-tuple, see Subsection \ref{ss:Koszul}. The generalization of the closed unit disc in the Toeplitz case becomes the Taylor  spectrum $\spa \su \cc^n$, the set of $\la\in\cc^n$, for which $K(A-\la ,\sH)$, the Koszul complex of $A-\la = (A_1-\la_1,\ldots,A_n-\la_n)$, is not contractible. The  notions of essential (resp. Fredholm) spectrum correspond to the values of $\la$ for which $K(A-\la ,\sH)$ has infinite- (resp. non-trivial finite-) dimensional cohomology. In the case that $\la$ belong to the Fredholm spectrum of $A$, the index $\T{Ind}(A-\la)$ stands for the Euler characteristic of the cohomology of the Koszul complex $K(A-\la,\sH)$.

By a theorem of Taylor (\cite{Tay:ACS}), the standard holomorphic functional calculus extends to a multivariable holomorphic functional calculus of such an n-tuple $A$, i.e.

\medskip

\noindent {\it $\sH$ is a module over the ring $\mathcal{O}(\T{Sp}(A))$ of germs of functions holomorphic in a neighbourhood of $\spa$.} 

\medskip

\noindent Our goal is to study the algebraic structure of this module. 

\bigskip

The analogue of the Fritz Noether index theorem was dealt with in \cite{KaNe12}. Since it is needed to formulate the results of this paper, we recall the statement.

\medskip

\noindent{\bf Theorem} {\it  Let $f_1,\ldots,f_n\in\C{O}(\T{Sp}(A))$. The n-tuple 
\[
f(A)=(f_1(A_1,\ldots,A_n),\ldots,f_n(A_1,\ldots,A_n))
\]
is Fredholm iff the set $Z(f)$ of common zeroes of $f_i$'s within $\T{Sp}(A) $ has no intersection with the essential spectrum of $A$. If $f(A)$ is Fredholm, the following holds:
\begin{enumerate}
\item The set $Z(f)$ of common zeroes is finite;
\item \begin{equation}\label{eq:local index}
\T{Ind}(f(A))=\sum_{\la\in Z(f)} \T{Ind}_\la(f) \, , \q 
\T{Ind}_\la(f) = m_\la(f) \cd \T{Ind}(A-\la).
\end{equation}
\noindent Here $m_\la(f)$ are the multiplicities of the points $\la\in Z(f)$ given by the dimensions
\[
m_\la (f)=\T{dim}_\cc \big( \C O_\la/\inn{f}_\la \big) \ ,
\]
where $\inn{f}_\la$ is the ideal generated by $f$ in the stalk $\C O_\la$ of convergent power series near $\la$. 
\end{enumerate} 
}
\medskip

The joint torsion, the generalization of the torsion invariant of Carey and Pincus, is defined in the following context. First a bit of notation. A complex $\mathcal{C}$ of Hilbert spaces is Fredholm if it has finite dimensional cohomology and, if $\mathcal{C}$ is Fredholm, its determinant line is the one dimensional vector space
\[
|\C C|=\Lambda^{\T{top}}H^+(\C C) \otimes (\Lambda^{\T{top}}H^-(\C C))^\da \ ,
\]
\noindent (see Subsection \ref{ss:frecom} for more details).  

\medskip

\noindent{\bf Definition} {\it Let $\mathcal{C}$ be a complex of Hilbert spaces and $f,g:\mathcal{C}\rightarrow \mathcal{C}$ be two commuting morphisms of complexes. Suppose that the mapping cones $C_f$ and $C_g$ of $f$ and $g$ are both Fredholm. Then the long exact cohomology sequences of the mapping cone of $f$ acting on $C_g$ and of $g$ acting on $C_f$ provide two trivializations of the determinant line of $C_{f,g}$ and the joint torsion $\T{JT}(\C C; f,g)$ is the quotient of these two trivializations.} 
\medskip

\noindent See \cite[Section 3.3]{Kaa:JTS} and Subsection \ref{ss:torsio} for details. \footnote{\noindent {\it The choice of signs in this paper is not standard, but is dictated by Theorem \ref{th:analyticity}. }}.
\medskip

The notion of joint torsion for the $\mathcal{O}(\T{Sp}(A))$-module $\sH$ appears in the following situation. 
Let $h_1,\ldots,h_{n-1},f,g$ be holomorphic functions defined in a neighbourhood of  $\spa$ and suppose that the zero sets $Z(h) \cap Z(f)$ and $Z(h) \cap Z(g)$ do not intersect the essential spectrum of $A$. In this case the commuting tuples $(h_1(A),\ldots, h_{n-1}(A),f(A))$ and $(h_1(A),\ldots, h_{n-1}(A),g(A))$ are Fredholm and the joint torsion $\T{JT}(K(h,\sH);f,g) \in \cc^*$ is well defined. The main result is as follows (Theorem \ref{t:joytorgen}).

\begin{thm}\label{th,intro,global} Joint torsion is multiplicative, i.e. 
\[
\T{JT}\big( K(h,\sH) ; f,g \big)=\prod_{\la\in Z(h) \cap \big(Z(f) \cup Z(g)\big)}
c_\la(h;f,g)^{\T{Ind}(A - \la)} \ ,
\]
where the local terms $c_\la(h;f,g)$ only depend on the image of the functions $f,\ g,\ h_i,i=1,\ldots, n-1$ in the stalk of $\C O_\la$.
\end{thm}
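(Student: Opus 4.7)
\medskip

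\noindent\textbf{Proof plan.} The strategy is to use Taylor's holomorphic functional calculus to localize $\sH$ at each point of $Z(h) \cap (Z(f)\cup Z(g))$, show that joint torsion is multiplicative over the resulting decomposition, and then identify each local contribution as a germ-dependent scalar raised to the power $\T{Ind}(A-\la)$.

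By the index theorem recalled above, both $Z(h)\cap Z(f)$ and $Z(h)\cap Z(g)$ are finite, so $F := Z(h)\cap(Z(f)\cup Z(g)) \su \T{Sp}(A)$ is finite and disjoint from the essential spectrum. Fix pairwise disjoint open neighborhoods $U_\la \su \T{Sp}(A)$ of each $\la \in F$. The locally constant function equal to $1$ on $U_\la$ and $0$ on a neighborhood of $\T{Sp}(A)\sem U_\la$ is holomorphic near $\T{Sp}(A)$, and Taylor's functional calculus applied to it yields commuting idempotents $P_\la$ with complement $P_\infty := I - \sum_{\la\in F} P_\la$, all commuting with $h_i(A), f(A), g(A)$. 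Consequently the Koszul complex splits as $K(h,\sH) = K(h, P_\infty\sH) \oplus \bigoplus_{\la \in F} K(h, P_\la\sH)$, and this decomposition is preserved by the commuting morphisms induced by $f(A)$ and $g(A)$.

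The next ingredient is a multiplicativity lemma for joint torsion: for a complex on which the commuting morphisms preserve a finite direct-sum decomposition, the mapping cones $C_f, C_g, C_{f,g}$ and their determinant lines factorise as tensor products over the summands, and the two trivializations of $|C_{f,g}|$ coming from the long exact sequences factor accordingly. This reduces $\T{JT}(K(h,\sH);f,g)$ to the product of its restrictions to the summands above. On the $P_\infty$-component, neither $Z(h)\cap Z(f)$ nor $Z(h)\cap Z(g)$ meets $\T{Sp}(A|_{P_\infty\sH})$, so the relevant mapping cones are acyclic and this contribution is $1$. By construction the restrictions $h_i(A)P_\la, f(A)P_\la, g(A)P_\la$ depend only on the germs in $\C O_\la$ (shrinking $U_\la$ if needed), so each local factor $\T{JT}(K(h,P_\la\sH); f, g)$ is already a germ invariant.

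It remains to show that each such local torsion has the form $c_\la(h;f,g)^{\T{Ind}(A-\la)}$ with $c_\la$ depending only on germs and not otherwise on $A$. The idea is to compare $P_\la\sH$, up to Fredholm-trivial summands, with a universal analytic model over $\C O_\la$ on which the joint torsion computes a single scalar $c_\la(h;f,g)$, and then use multiplicativity of determinant lines together with the local index formula \eqref{eq:local index} to lift this scalar to the required power. The principal obstacle, I expect, is making this extraction rigorous: $P_\la\sH$ is infinite-dimensional and need not literally decompose as a direct sum of copies of a model module, so one must argue via a stability or deformation argument (within the class of Fredholm perturbations) that summands of vanishing local index contribute trivially to the joint torsion, isolating only the $\T{Ind}(A-\la)$-fold multiplicity carried by the universal model. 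A secondary subtlety is threading the nonstandard sign conventions (cf.\ the footnote after the definition of joint torsion) through the multiplicativity lemma so that $c_\la$ emerges as an unambiguous scalar invariant.
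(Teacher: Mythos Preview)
Your localization strategy is genuinely different from the paper's proof, and the gap you flag in your final paragraph is real and not minor.

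The paper (Theorem~\ref{t:joytorgen}) does not split $\sH$ via spectral idempotents. It proceeds instead by analytic deformation: one perturbs $f$ to $f_w := f - w$ for small $w \in \cc$, and for $w \neq 0$ the triple $(h, f_w, g)$ has no common zero in $\T{Sp}(A)$. In this non-singular situation Theorem~\ref{t:nonsin} gives a closed formula for the joint torsion as a ratio of multiplicative Lefschetz numbers, which is already a product over the points of $Z(h,f_w) \cup Z(h,g)$. One then groups these factors into small polydiscs around each $\la$, uses homotopy invariance of the Fredholm index to replace each $\T{Ind}(A-\mu)$ by the constant $\T{Ind}(A-\la)$, and lets $w \to 0$. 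The analyticity of joint torsion (Theorem~\ref{th:analyticity}, via Theorem~\ref{t:locjoytor}) guarantees the limit exists and equals $\T{JT}(K(h,\sH);f,g)$, while the limit of each local factor is $c_\la(h;f,g)$ \emph{by definition}: Theorem~\ref{t:joystalk} defines $c_\la$ as precisely this limit, realized concretely on the Hardy space over the polydisc where $\T{Ind}=1$.

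Your route, by contrast, produces local torsions $\T{JT}(K(h,P_\la\sH);f,g)$ which are indeed germ invariants, but their dependence on $A$ is through the module $P_\la\sH$, not visibly through the integer $\T{Ind}(A-\la)$. Extracting that integer as an exponent is the entire content of the theorem, and your sketch supplies no mechanism for it beyond a hoped-for comparison with an unspecified ``universal model''. The paper itself flags this as open: the remark following Theorem~\ref{t:joystalkint} in the introduction states that matching the torsion obtained by localizing $\sH$ at $\G p_\la$ with $c_\la(h;f,g)^{\T{Ind}(A-\la)}$ is only \emph{conjectural}. Your Riesz-projection localization is a close variant of the same idea and faces the same obstacle. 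To finish your argument you would in effect have to run the deformation/Lefschetz machinery on each $P_\la\sH$ anyway, so the initial localization does not shorten the proof.
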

The local terms in the product above are given by the following result (Theorem \ref{t:joystalk}).

\begin{thm}\label{t:joystalkint}
Let $U \su \cc^n$ be open with compact closure $\ov U$ and let $h_1,\ldots,h_{n-1},f,g \in \C O(\ov U)$. Suppose that $\la \in U$ satisfies
\[
Z(h) \cap \big( Z(f) \cup Z(g) \big) \su \{\la\}.
\]
Then the sequence of quotients
\[
c_\la(h^k;f^k,g^k) = \frac{\prod_{\mu \in U \cap Z(h^k) \cap Z(g^k)} f^k(\mu)^{m_\mu(h^k,g^k)}} 
{\prod_{\nu \in U \cap Z(h^k) \cap Z(f^k)} g^k(\nu)^{m_\nu(h^k,f^k)}}
\]
converges to $c_\la(h;f,g)$ for any sequences $\{h_i^k\}, \ \{f^k\}, \ \{g^k\}$ in $\C O(\ov U)$
which converge uniformly to $h_i,f,g$ and for which
\[
Z(h^k) \cap Z(f^k) \cap Z(g^k) = \emptyset \q \forall k \in \nn \ .
\]
\end{thm}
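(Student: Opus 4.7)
The plan is to realise both the approximating quotients and the target $c_\la(h;f,g)$ as values of a single global joint torsion, for a judiciously chosen witness tuple, and to conclude by analyticity. To this end, I would fix a commuting tuple $A$ on a Hilbert space $\sH$ with $\T{Sp}(A) \su U$ and $\T{Ind}(A-\mu)=1$ for every $\mu\in U$; a suitable tuple of Toeplitz operators on the Hardy space of a polydisc containing $\ov U$ (adjusted for parity) provides such an $A$. For this witness tuple, Theorem \ref{th,intro,global} reads
\[
\T{JT}\bigl(K(\tilde h,\sH); \tilde f, \tilde g\bigr) = \prod_{\mu} c_\mu(\tilde h;\tilde f,\tilde g)
\]
for any holomorphic inputs $(\tilde h, \tilde f, \tilde g)$ with zero loci in $U$, the product running over $\mu \in U \cap Z(\tilde h) \cap (Z(\tilde f) \cup Z(\tilde g))$.

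Applied to the unperturbed data $(h,f,g)$ this collapses to the single factor $c_\la(h;f,g)$. Applied to the perturbed data $(h^k,f^k,g^k)$ the product runs over the (possibly several) points of intersection in $U$. The heart of the argument is then the following local identification, valid under the empty triple intersection assumption: at each $\mu \in U \cap Z(h^k) \cap (Z(f^k) \cup Z(g^k))$ exactly one of $f^k,g^k$ is a unit in $\C O_\mu$, and correspondingly
\[
c_\mu(h^k;f^k,g^k) = \begin{cases} f^k(\mu)^{m_\mu(h^k,g^k)}, & \mu \notin Z(f^k),\\ g^k(\mu)^{-m_\mu(h^k,f^k)}, & \mu \notin Z(g^k). \end{cases}
\]
To establish this one localises: since $c_\mu$ depends only on the germ of $(h_i, f, g)$ at $\mu$, one may replace $\sH$ by the generalised eigenspace of $A$ at $\mu$. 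In the first case $f^k(A)$ is invertible on this local piece, the mapping cone of $f^k$ on the local Koszul complex $K(h^k, g^k; \sH)_\mu$ is acyclic, and the joint torsion reduces to $\det(f^k)$ evaluated on that complex, i.e.\ to $f^k(\mu)$ raised to the Euler characteristic, which by the local index formula (\ref{eq:local index}) equals $m_\mu(h^k, g^k)$. The symmetric case is handled identically, the opposite sign reflecting the anti-symmetric role of $f,g$ in the definition of joint torsion (cf.\ the sign footnote).

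Multiplying the local factors reproduces exactly the quotient in the statement of Theorem \ref{t:joystalkint}, so that quotient equals $\T{JT}\bigl(K(h^k,\sH);f^k,g^k\bigr)$. The proof then closes with the analyticity of joint torsion in its holomorphic inputs (Theorem \ref{th:analyticity}, referenced in the footnote): uniform convergence $(h^k,f^k,g^k) \to (h,f,g)$ forces
\[
\lim_k \T{JT}\bigl(K(h^k,\sH);f^k,g^k\bigr) = \T{JT}\bigl(K(h,\sH);f,g\bigr) = c_\la(h;f,g),
\]
giving the claimed convergence. The principal obstacle is the local step: rigorously implementing the localisation, verifying acyclicity of the mapping cone when one of $f^k, g^k$ is locally invertible on a piece of $\sH$, and tracking the signs demanded by the convention recorded in the footnote.
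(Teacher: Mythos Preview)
Your overall strategy is correct and close to the paper's, but it takes a detour and invokes the wrong continuity principle.

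The paper's route is shorter: it does not pass through Theorem~\ref{th,intro,global} at all.  Instead it observes (this is Theorem~\ref{t:nonsin} in Section~\ref{s:Multiplicative Lefschetz numbers}) that whenever the triple intersection $Z(h^k)\cap Z(f^k)\cap Z(g^k)$ is empty on the Taylor spectrum, the joint torsion is \emph{already equal} to the displayed quotient of products, as a single identity, without first localising and then recombining.  Your step~4 (the local identification of $c_\mu(h^k;f^k,g^k)$ when one of $f^k,g^k$ is a unit) is in effect a second proof of a special case of Theorem~\ref{t:nonsin}; it works, but it reproduces machinery the paper has already set up for exactly this purpose.  Going through Theorem~\ref{th,intro,global} is also logically delicate, since that theorem is proved using the holomorphic--family version (Theorem~\ref{t:joystalk}) of the very statement you are establishing; the argument is not circular, but the direct appeal to Theorem~\ref{t:nonsin} avoids the issue entirely.

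There is one genuine slip.  You close with Theorem~\ref{th:analyticity}, but analyticity concerns holomorphic one--parameter families $w\mapsto(h_w,f_w,g_w)$, not sequences.  For uniform convergence of sequences you need the \emph{continuity} of joint torsion on the parameter space $\sF$, which is the separate Theorem~\ref{t:conjoy}; the paper's remark after Theorem~\ref{t:joystalk} makes exactly this substitution.  A second small point: taking $\T{Sp}(A)\subseteq U$ is not quite the right containment.  One wants a closed polydisc centred at $\la$ and contained in $U$ as the spectrum (so that the functional calculus applies and $\la$ lies in the interior), together with the elementary compactness argument that for large $k$ all common zeros of $(h^k,f^k)$ and $(h^k,g^k)$ in $\ov U$ lie in that polydisc; otherwise the product over $U$ in the statement and the product over $\T{Sp}(A)$ coming from the joint torsion need not match.
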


\begin{remark}
The assumptions of Theorem \ref{th,intro,global} can be weakened. In fact it is sufficient for the functions  $f$ and $g$ to be defined and holomorphic in a neighbourhood of $X=Z(h)\cap \T{Sp}( A)$. Using the methods of \cite{Mueller}, Taylor holomorphic calculus can be extended to define operators $f(A)$ and $g(A)$ acting on the Koszul complex $K(h,\sH)$ and the conclusions of the theorem will still hold. 
\end{remark}

\begin{remark}
One can also define the ``local joint torsion'' by localizing the Hilbert space at the prime ideal $\G p_\la$ of functions in $\C O(\T{Sp}(A))$ vanishing at $\la$. The resulting numbers are conjecturally the same as the local joint torsion $c_\la(h;f,g)^{\T{Ind}(A - \la)}$ as defined above.
\end{remark}

As an application of our main results we are able to extend the definition of the Tate tame symbol from Riemann surfaces to more general complex analytic curves. More precisely, we work with a complex analytic curve $(X,\C O_X)$ and a fixed point $x \in X$ such that there exists a local model $\big( Z(h), \C O/\inn{h}\big|_{Z(h)}\big)$ near $x$ which is determined by a holomorphic map
\[
h = (h_1,\ldots,h_{n-1}) : U \to \cc^{n-1} 
\]
where $U \su \cc^n$ is an open set. For any two holomorphic functions $f,g \in \C O_X(V)$ which are transversal to the curve near $x \in X$ we may then define the Tate tame symbol
\[
c_x(X;f,g) \in \cc^*
\] 
by applying the local description of Theorem \ref{t:joystalkint}. This invariant satisfies the properties which define a symbol in arithmetic, see \cite{Tat:SA}.

\begin{remark}
The conditions on the specific type of local model near $x \in X$ can be removed by working with more general resolutions than the Koszul complex. In particular, we will be able to study tame symbols of general complex analytic curves. We plan to carry out the details in a future publication.
\end{remark}
\medskip

The structure of the paper is a follows.

\medskip

\noindent Section \ref{s:Determinants, torsion and joint torsion} is devoted to basic definitions involving determinants and torsion of Fredholm complexes. The definition basic to this paper, that of joint torsion, is given in Subsection \ref{ss:torsio}. 

\noindent The notion underlying these constructions is a determinant functor on the triangulated category of Fredholm complexes but, since this more abstract context is not necessary to understand what follows, we have avoided this language.
\medskip

\noindent Section \ref{s:Joint torsion of commuting operators} is devoted to the generalities involving Koszul complexes of commuting families of bounded operators. After recalling the basic definitions (Subsection \ref{ss:Koszul}) we describe the localization procedure involved in the computation of local indices and state the local index theorem (see Subsection \ref{ss:local indices}).
\medskip

\noindent Section \ref{s:Multiplicative Lefschetz numbers} contains the basic technical computations involving the joint torsion. The main result of this section, Theorem \ref{t:nonsin}, gives a formula for joint torsion in the case when the (n+1)-tuple $(h,f,g)$ has no common zeroes in $\T{Sp}(A)$. The proof is based on the observation that in this case the joint torsion is given by the quotient of two determinants, both of which can be computed explicitly\footnote{The fact that the joint torsion of two Fredholm operators $(A_1,A_2)$ whose Koszul complex is contractible can be given  as a quotient of two determinants holds in fact in a more general context. It is sufficient to assume that $A_1$ and $A_2$ commute up to a trace class operator and that, moreover, one can construct an acyclic complex \begin{footnotesize}$\xymatrix{\sH\ar[r]^{d_1}&\sH\op\sH \ar[r]^{d_2}&\sH}$\end{footnotesize}, where the boundary maps are trace class perturbations of $(A_1,A_2)^t$ (resp. 
$(A_2,-A_1)$), see \cite{Migler}}.
\medskip

\noindent Section \ref{s:Localization of the joint torsion} contains the proofs of the main theorems listed above. This section relies heavily on the continuity properties of the joint torsion as investigated by the authors in \cite{KaNe14}.
\medskip

\noindent Section \ref{s:tamsymcom} contains the application of our results to the setting of complex analytic curves.

\section{Determinants, torsion and joint torsion}\label{s:Determinants, torsion and joint torsion}


\noindent \emph{Throughout this section $\ff$ will be a fixed field of characteristic zero.}

\subsection{Determinants of vector spaces}\label{s:detfin}

\subsubsection{Picard category of graded lines}

\begin{dfn}\label{dfn:Picard}
$\G{L}$ denotes the category of $\zz$-graded lines over $\ff$. The objects of $\G{L}$ are thus pairs $(V,n)$ where $V$ is a one-dimensional vector space over $\ff$ and $n \in \zz$. The set of morphisms $\T{Mor}\big( (V,n), (W,m) \big)$ is  the set of isomorphisms $V \to W$ when $n = m$ and empty when $n \neq m$.
\end{dfn}

The category $\G{L}$ becomes a {\it Picard category} when equipped with the bifunctor 
\[
\ot : \big( (V,n), (W,m) \big) \mapsto (V \ot W,n+m)
\]
which satisfies the obvious associativity constraint and the commutativity constraint 
\[
\psi_{(V,n),(W,m)} : (V,n) \ot (W,m) \to (W,m) \ot (V,n) \q \xi \ot \eta \mapsto (-1)^{n \cd m} \eta \ot \xi.
\]
$\da : \G{L} \to \G{L}$ denotes the covariant functor: 
\[
\da(V,n):= (V^*,-n)\mbox{ on objects and $\da(\al) :=  (\al^{-1})^*$ on morphisms,}
\]
where the superscript ${(\cdot )}^{*}$ denotes the linear dual (resp. transpose) of a vector space (resp. linear transformation).

Below we will often use the superscript ${}^\da$ to denote the image under $\da$. 
Together with the natural isomorphisms 
\[
\begin{split}
& c_{(V,n),(W,m)} : (V,n)^\da \ot (W,m)^\da \to \big( (V,n) \ot (W,m)\big)^\da \\
& \big(c_{(V,n),(W,m)}(\la\ot \mu)\big)(\xi \ot \eta) := \la(\xi) \cd \mu(\eta) \cd (-1)^{n\cd m},
\end{split}
\]
the covariant functor $\da$ becomes a monoidal functor. Furthermore, for any graded line $(V,n)$, the image $(V,n)^\da = (V^*,-n)$ together with the isomorphism $\ep_{(V,n)} : V \ot V^* \to \ff$, $\ep_{(V,n)} : \xi \ot \la \mapsto \la(\xi)$ is a fixed right inverse. Here the ground field $(\ff,0)$ and the obvious isomorphisms $V \ot \ff \cong V \cong \ff \ot V$ play the role of a fixed unit.
%
%

\subsubsection{Graded vector spaces}
\begin{dfn}$\mbox{}$
$\G{V}$ denotes the abelian category of \emph{finite} dimensional vector spaces over $\ff$ and  $\G{V}_{\T{iso}}$ denotes the  subcategory of $\G{V}$ with the same objects as $\G{V}$ and where
\[
\T{Mor}_{\G{V}_{\T{iso}}}(V,W)=\{A\in \T{Mor}_{\G{V}}(V,W)\mid A\mbox{ is invertible}\}
\]
$|\cdot|$ will denote the determinant functor given by
\[
\G{V}_{\T{iso}}\ni V\rightarrow (\Lambda^{\T{top}}V,\T{dim}V)\in \G{L}
\]
on objects and by $f \rightarrow \Lambda^{\T{top}}(f )$ on morphisms (invertible linear transformations), where $\La(V)$ denotes the exterior algebra over $V$.
\end{dfn}
\begin{remark}
Note that it is a determinant functor as defined for example in \cite[Definition 2.3]{Bre:DTC} (see also \cite{Del:DC} or \cite[Definition 1.4]{Knu:DEB}).  
\end{remark}
The basic property of the determinant functor is the following simple observation. Given a short exact sequence of finite dimensional vector spaces
\[
\xymatrix{
\De : 0\ar[r]& V\ar[r]^\iota& W\ar[r]^\pi& Z\ar[r]& 0,
}
\]
there exists an associated canonical isomorphism
\begin{equation}
\label{iso}
|\De|:|W|\rightarrow |V|\otimes |Z|
\end{equation}
given as follows. Let $v_1,\ldots,v_{\T{dim} V}$ be a linear basis of the image of $V$ in $W$ and $w_1,\ldots ,w_{\T{dim}Z}$ its completion to a linear basis of $W$. Then
\[
\begin{split}
& |\De|(v_1\wedge\ldots\wedge v_{\T{dim}V}\wedge w_1\wedge\ldots\wedge w_{\T{dim}Z} ) \\
& \qq = (-1)^{\T{dim} V \cd \T{dim} Z} \big( \iota^{-1}(v_1)\wedge\ldots\wedge \iota^{-1}(v_{\T{dim}V}) \big) \otimes \big(\pi(w_1)\wedge\ldots\wedge \pi(w_{\T{dim}Z} )\big).
\end{split}
\]
It is straightforward to check that $|\De|$ is independent of the choices made. Remark that the extra sign $(-1)^{\T{dim} V \cd \T{dim} Z}$ is non-standard.
\medskip

This determinant functor extends to the  category of $\zz/2\zz$-graded finite dimensional vector spaces. First some notation.
\begin{notation}$\mbox{}$
\begin{enumerate}
\item $\G{V}^{\zz/2\zz}$ denotes the category of finite dimensional $\zz/2\zz$-graded vector spaces with objects $V = V^+ \op V^-$ and morphisms $\alpha = \ma{cc}{\al^+ & 0 \\ 0 & \al^-} : V \to W$, where $\alpha^{\pm} :V^{\pm} \to W^{\pm}$ are linear maps. 

\item $[1] : \G{V}^{\zz/2\zz} \to \G{V}^{\zz/2\zz}$ denotes the self-equivalence of $\G{V}^{\zz/2\zz}$ given by change of grading: 
\[
V[1]^{\pm}=V^{\mp} \mbox{ on objects and $\alpha[1]^{\pm}=\alpha^{\mp}$ on morphisms}.
\]
\item $\G{V}^{\zz/2\zz}_{\T{iso}}$ is the category with the same objects as $\G{V}^{\zz/2\zz}$ and morphisms given by $\alpha = \ma{cc}{\al^+ & 0 \\ 0 & \al^-}$ with $\alpha^{\pm}$ {\it invertible}.
\end{enumerate}
\end{notation}
\begin{dfn}\label{determinant, graded vector spaces}
$|\cdot |$ is the functor from $\G{V}^{\zz/2\zz}_{\T{iso}}$ to $\G{L}$ given by
\[
|V|=|V^+|\otimes |V^-|^{\da} \mbox{ on objects and } |\alpha |=|\alpha^+|\otimes |\alpha^-|^{\da} 
\mbox{ on morphisms}.
\]
\end{dfn}

\subsubsection{Torsion}\label{ss:torsio}
The analogue of the isomorphism \eqref{iso} in the context of $\zz$ (or $\zz/2\zz$) graded vector spaces has the following form.  

Let $V$ be a finite dimensional vector space. The \emph{degree map} $\ep : \La(V) \to \nn_0$ on the exterior algebra over $V$ is defined on homogeneous elements by $\ep : v_1 \wlw v_k \mapsto k$. Suppose that $L$ is a one-dimensional vector space and $t\in L$ a non-zero vector. Then $t^*\in L^*$ will denote the unique vector such that $t^*(t)=1$.

Suppose that $V_i=V_i^+\oplus V_i^-,\ i=1,2$ and $V=V^+\oplus V^-$ are $\zz/2\zz$-graded vector spaces and that we are given grading-preserving linear maps 
\[
f :V_1\rightarrow V_2,\ i:V_2\rightarrow V, \ p : V \to V_1[1]
\]
such that the following six term sequence of finite dimensional vector spaces is exact:
\begin{equation}
\label{eq:six term}
{\mathcal V} : \q \begin{CD}
V_1^+ @>{f^+}>> V^+_2 @>{i^+}>> V^+ \\
@A{p^-}AA & & @VV{p^+}V \\
V^- @<<{i^-}< V_2^- @<<{f^-}< V_1^-
\end{CD}
\end{equation}
For future use, let us introduce the following.
\begin{notation}
We will write the six term exact sequence ${\mathcal V}$ above as the triangle
\[
\xymatrix{
{\mathcal V:}&&V_1\ar[rr]^{f}&&V_2\ar[dl]^i\\
&&&V\ar[ul]^{p[1]}&
}.
\]
and refer to it as an \emph{exact triangle} of $\zz/2\zz$-graded vector spaces.
\end{notation}

\begin{dfn}\label{d:toriso}$\mbox{}$
Suppose that we are given a six term exact sequence $\mathcal V$ of the form  (\ref{eq:six term}).
\noindent Set
\[
\begin{split}
& (V^+_1)_{(0)} := \T{Ker}(f^+) \q (V^+_2)_{(0)} := \T{Ker}(i^+) \q (V^+)_{(0)} := \T{Ker}(p^+) \q \T{and} \\
& (V^-_1)_{(0)} := \T{Ker}(f^-) \q (V^-_2)_{(0)} := \T{Ker}(i^-) \q (V^-)_{(0)} := \T{Ker}(p^-)
\end{split}
\]

\noindent Choose subspaces $(V_i^\pm)_{(1)}\subset V_i^\pm $, $i=1,2$ (resp. $(V^\pm)_{(1)}\subset V^\pm$) complementary to $(V_i^\pm)_{(0)}\subset V_i^\pm $, $i=1,2$ (resp. $(V^\pm)_{(0)}$) and non-zero vectors 
\[
t^\pm_i\in |(V_i^\pm)_{(1)}|,\ i=1,2  \mbox{ (resp.} t^\pm\in |(V^\pm)_{(1)}|).
\]

\noindent The {\bf torsion isomorphism} of $\mathcal V$ is the isomorphism
\[
|\mathcal{V}| : |V_2| \to |V_1|  \ot |V| 
\]
defined by
\[
\begin{split}
& |\mathcal{V}| \big((f^+(t^+_1) \we t^+_2) \ot (f^-(t^-_1) \we t^-_2)^*\big) \\ 
& \q = (-1)^{\mu(\mathcal{V})}(p^-(t^-) \we t^+_1) \ot (p^+ (t^+) \we t^-_1)^* \ot (i^+(t^+_2) \we t^+) \ot (i^-(t^-_2) \we t^-)^*
\end{split}
\]
The sign exponent is defined by
\[
\begin{split}
\mu(\C V) & := \ep(t_2^+) \cd \big( \ep(t_1^-) + \ep(t_1^+) \big) + \ep(t_1^-) \cd \big( \ep(t^+) + \ep(t^-)\big) \\ 
& \q + \ep(t^-) \cd \big(\ep(t_2^+) + \ep(t_2^-)\big) + \ep(t^+) \in \nn_0.
\end{split}
\]

\end{dfn}

\medskip

It is a consequence of \cite[Lemma 2.1.3]{Kaa:JTS} that the torsion isomorphism does not depend on the choices made. For future reference let us note the following simple fact:

\begin{lemma}\label{lemma:det}
Suppose that we are given a six term exact sequence $\mathcal{V}$ of the form
\[
\xymatrix{
{\mathcal V:}&&W\ar[rr]^{f}&&W\ar[dl]\\
&&&0\ar[ul]&
}.
\]
The torsion isomorphism of $\mathcal{V}$ is given by
\[
|\mathcal{V}|= \frac{\det f^-}{\det f^+}.
\]
\end{lemma}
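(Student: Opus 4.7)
The plan is to compute the torsion isomorphism directly from Definition \ref{d:toriso} applied to this degenerate six term exact sequence, and then translate the resulting relation into a statement about determinants.

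First I would unpack the data. Since $V=0$, exactness of the hexagon forces $f:W\to W$ to be an isomorphism (both $f^+$ and $f^-$), so
\[
(V_1^\pm)_{(0)} = \ker(f^\pm) = 0, \qquad (V_2^\pm)_{(0)} = \ker(i^\pm) = W^\pm, \qquad (V^\pm)_{(0)} = 0.
\]
Choosing complements, the only nontrivial pieces are $(V_1^\pm)_{(1)} = W^\pm$, while $(V_2^\pm)_{(1)} = 0$ and $(V^\pm)_{(1)} = 0$. Pick non-zero elements $t_1^\pm \in |W^\pm|$; the vectors $t_2^\pm$ and $t^\pm$ are then forced to be the empty wedge $1\in\ff$ of degree zero.

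Next I would evaluate the sign exponent $\mu(\mathcal{V})$. Since $\ep(t_2^\pm) = \ep(t^\pm) = 0$, inspection of each of the four summands defining $\mu(\mathcal{V})$ shows every term vanishes, so $\mu(\mathcal{V}) = 0$. I would then substitute into the defining formula
\[
|\mathcal{V}|\bigl((f^+(t_1^+)\wedge t_2^+)\otimes (f^-(t_1^-)\wedge t_2^-)^*\bigr) = (p^-(t^-)\wedge t_1^+)\otimes(p^+(t^+)\wedge t_1^-)^*\otimes(i^+(t_2^+)\wedge t^+)\otimes(i^-(t_2^-)\wedge t^-)^*.
\]
Because $i=0$ and $p=0$, and all the $t$'s on both ends of $p$ and $i$ are empty wedges that map to empty wedges under the induced maps on exterior algebras, the right hand side collapses to $t_1^+\otimes (t_1^-)^*$ (after trivially identifying $|V| = \ff$). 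On the left hand side, $f^\pm(t_1^\pm) = \det(f^\pm)\cdot t_1^\pm$ by definition of the determinant of an endomorphism of a one dimensional space, so that
\[
|\mathcal{V}|\Bigl(\det(f^+)\,t_1^+\otimes \det(f^-)^{-1}(t_1^-)^*\Bigr) = t_1^+\otimes(t_1^-)^*,
\]
where the factor $\det(f^-)^{-1}$ appears because the dualization functor $\da$ sends $|f^-|$ to $(|f^-|^{-1})^*$. Rearranging yields $|\mathcal{V}| = \det(f^-)/\det(f^+)$ as a scalar endomorphism of $|W|$, which is the claimed formula.

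The entire argument is essentially a bookkeeping exercise, so I do not foresee a substantive obstacle. The one point requiring care is the interplay between the covariant dualization functor $\da$ and scalar multiplication on one dimensional lines, which introduces the inverse of $\det f^-$ rather than $\det f^-$ itself; getting this convention right is what produces the quotient $\det f^-/\det f^+$ rather than its reciprocal.
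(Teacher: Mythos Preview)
Your proof is correct and is exactly the direct unwinding of Definition \ref{d:toriso} that the paper has in mind; the paper's own proof is simply the sentence ``This is a straightforward consequence of the definitions,'' and you have supplied those details accurately, including the sign check and the handling of the dual element $(f^-(t_1^-))^* = \det(f^-)^{-1}(t_1^-)^*$.
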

\begin{proof}
This is a straightforward consequence of the definitions.
\end{proof}

\subsection{Fredholm complexes}\label{ss:frecom}
\begin{dfn}
A \emph{Fredholm complex} $\C X$ is a finite cochain complex of (possibly infinite dimensional) vector spaces
\[
\xymatrix{{\mathcal X}:
\ldots\ar[r]&X^k \ar[r]^{d^k}&X^{k+1}\ar[r]^{d^{k+1}} &
X^{k+2}\ar[r]&\ldots
}
\]
where the cohomology groups
\[
H^k({\mathcal X})=\T{Ker}\ d^k/\T{Im}\ d^{k-1}
\]
are finite dimensional. 

The \emph{determinant} of the Fredholm complex ${\mathcal{X}}$ is the graded line
\[
|\mathcal{X}|=|H^+({\mathcal{X}})|\otimes |H^-(\mathcal{X})|^\da
\]
where $H^+(\C X) := \op_{k \in \zz} H^{2k}(\C X)$ and $H^-(\C X) := \op_{k \in \zz} H^{2k+1}(\C X)$.
The \emph{index} of $\mathcal{X}$ is the integer
\[
\T{Ind}(\mathcal{X})=\T{dim}(H^+({\mathcal{X}}))-\T{dim}(H^-({\mathcal{X}}))
\]
\end{dfn}

We let $\C X[1]$ denote the \emph{shift} of the Fredholm complex $\C X$. $\C X[1]$ is again a Fredholm complex with cochains $\C X[1]^{k} := \C X^{k+1}$ and with differentials $d[1]^k := - d^{k+1} : X^{k+1} \to X^{k+2}$, $k \in \zz$.

\begin{dfn}
Let ${\mathcal{X}}$ and ${\mathcal{Y}}$ be finite cochain complexes and let $f : {\mathcal{X}} \to {\mathcal{Y}}$ be a cochain map. The \emph{mapping cone} of $f$ is the cochain complex $C_f$ defined by $C_f^k := X^{k+1} \op Y^k$ and 
\[
d_f^k := \ma{cc}{-d^{k+1}_{\C X} & 0 \\ f^{k+1} & d^k_{\C Y}} : X^{k+1} \op Y^k\to X^{k+2} \op Y^{k+1}
\]
\end{dfn}

Suppose that $f : {\mathcal{X}} \to {\mathcal{Y}}$ is a cochain map of Fredholm complexes. The mapping cone is again a Fredholm complex and it fits into the following \emph{mapping cone triangle:}
\[
\begin{CD}
\De_f : \q \C X @>{f}>> \C Y @>{i}>> C_f @>{p}>> \C X[1]
\end{CD}
\]
where the cochain maps $i : \C Y \to C_f$ and $p : C_f \to \C X[1]$ are given by the inclusions $i^k := \ma{c}{0 \\ 1} : Y^k \to X^{k+1} \op Y^k$ and the projections $p^k := \ma{cc}{1 & 0} : X^{k + 1} \op Y^k \to X^{k+1}$, $k \in \zz$.

By passing to cohomology, the mapping cone triangle associated to $f$ yields an exact triangle of $\zz/2\zz$-graded vector spaces:
\begin{equation}\label{six term}
\begin{CD}
@. H^+(\C X) @>{H^+(f)}>> H^+(\C Y) @>{H^+(i)}>> H^+(C_f) \\
H(\De_f) : \q @. @A{H^-(p)}AA @. @V{H^+(p)}VV \\
@. H^-(C_f) @<<{H^-(i)}< H^-(\C Y) @<<{H^-(f)}< H^-(\C X)
\end{CD}
\end{equation}


\begin{dfn}
Let $f : {\mathcal{X}} \to {\mathcal{Y}}$ be a cochain map of Fredholm complexes. The \emph{torsion isomorphism of $f$} is the torsion isomorphism of $H(\Delta_f)$,
\[
|H(\Delta_f)| : |\mathcal{Y}|\rightarrow |\C X|\otimes |C_f|
\]
(compare with Definition \ref{d:toriso}).
\end{dfn}

\subsection{Joint torsion}
Let $\mathcal{X}$ be a finite cochain complex and let $f:\mathcal{X}\rightarrow \mathcal{X}$ and $g:\mathcal{X}\rightarrow \mathcal{X}$ be two commuting morphisms of $\mathcal{X}$. Remark that $\C X$ is \emph{not} assumed to be a Fredholm complex. Instead, we will assume that the mapping cones $C_f$ and $C_g$ are Fredholm complexes. Since $f$ and $g$ commute (at the level of cochains) we then obtain two cochain maps of Fredholm complexes:
\[
\begin{split}
\delta (g)=\ma{cc}{
g&0\\
0&g
} : C_f\rightarrow C_f
\q \T{and} \q
\delta (f)=\ma{cc}{
f&0\\
0&f
} : C_g \rightarrow C_g
\end{split}
\]
Note that the two mapping cones
$
C_{\delta (f)}
$
and
$
C_{\delta (g)}
$
are in fact isomorphic, the isomorphism is given by the cochain map
$\Phi : C_{\delta (g)} \to C_{\delta (f)}$ defined by
\begin{equation}\label{eq:mapconiso}
\Phi^k := \ma{cccc}{-1 & 0 & 0 & 0 \\ 0 & 0 & 1 & 0 \\ 0 & 1 & 0 & 0 \\ 0 & 0 & 0 & 1} 
: X^{k + 2} \op X^{k+ 1} \op X^{k + 1} \op X^k \to X^{k + 2} \op X^{k+ 1} \op X^{k + 1} \op X^k
\end{equation}
for all $k \in \zz$.

\begin{dfn}\label{d:joytor}
Let $\mathcal{X}$ be a finite cochain complex and let $f:\mathcal{X}\rightarrow \mathcal{X}$ and $g:\mathcal{X}\rightarrow \mathcal{X}$ be two commuting morphisms of $\mathcal{X}$ such that both $C_f$ and $C_g$ are Fredholm complexes. The \emph{joint torsion} of $f$ and $g$ is the non-zero number inducing the automorphism
\[
\T{JT}(\C X; f,g) := |H(\Delta_{\delta (f)})|^{-1} \ci |\Phi| \ci |H(\Delta_{\delta (g)}) | : \ff \to \ff \, ,
\]
where we have used the canonical isomorphisms
\[
|C_f |\otimes |C_f |^\da\cong  |C_g |\otimes |C_g |^\da\cong \ff
\]
to identify the torsion isomorphisms of $\de(g)$ and $\de(f)$ with maps $| H(\De_{\de(g)}) | : \ff \to |C_{\de(g)}|$ and $| H(\De_{\de(f)}) | : \ff \to |C_{\de(f)}|$.
\end{dfn}

\subsubsection{Analyticity of joint torsion}
Let $\{ X^k \}_{k \in \zz}$ be a fixed and finite family of \emph{Hilbert spaces}, thus $X^k = \{0\}$ for all indices outside a finite subset of $\zz$.

Let us for future reference state the following result, which is a consequence of \cite[Theorem 9.1]{KaNe14}. 

Let $\sF$ denote the set of triples $(\mathcal{X};f,g)$, where 
\[
\begin{CD}
\C X : \ldots @>>> X^k @>{d^k}>> X^{k + 1} @>d^{k+1}>> X^{k+2} @>>> \ldots 
\end{CD}
\]
is a cochain complex with each $d^k : X^k \to X^{k + 1}$ a bounded operator and with $f, g : \C X \to \C X$ commuting cochain maps such that $f^k, g^k : X^k \to X^k$ are bounded operators and such that $C_f$ and $C_g$ are Fredholm. We can realize the elements of $\sF$ as a set of bounded operators $(d,f,g)$ on the fixed $\zz$-graded Hilbert space $X = \op_{k \in \zz} X^k$ and hence endow $\sF$ with the induced topology coming from the \emph{operator norm}.

\begin{thm}\label{th:analyticity}
The map
\[
\sF \ni (\mathcal{X};f,g)\to \T{JT}( \C X; f,g) \in \cc^*
\]
is analytic. Thus, for any analytic map $\al : U \to \sF$ defined on an open subset $U \su \cc$ we have that $\T{JT} \ci \al : U \to \cc^*$ is analytic.
\end{thm}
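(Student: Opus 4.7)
My plan is to reduce the claim to the analyticity of the torsion isomorphism for analytic families of cochain maps of Fredholm complexes, which is precisely the content of \cite[Theorem 9.1]{KaNe14}. By Definition \ref{d:joytor}, the joint torsion is the composition
\[
\T{JT}(\C X; f, g) = |H(\Delta_{\delta(f)})|^{-1} \circ |\Phi| \circ |H(\Delta_{\delta(g)})|.
\]
The cone-switching cochain isomorphism $\Phi$ given by \eqref{eq:mapconiso} is a constant matrix, independent of the operator data $(d,f,g)$, so $|\Phi|$ contributes a fixed sign. The problem therefore reduces to establishing that the two torsion isomorphisms depend analytically on $(d,f,g)$ after composition with the canonical identifications $|C_f| \ot |C_f|^\da \cong \ff \cong |C_g| \ot |C_g|^\da$.

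To make this precise I would test analyticity by pre-composing with an arbitrary analytic map $\al : U \to \sF$ from an open subset $U \su \cc$. Such a map provides analytic families $d(\zeta), f(\zeta), g(\zeta)$ of bounded operators for which the mapping cones $C_{f(\zeta)}$ and $C_{g(\zeta)}$ are Fredholm at every $\zeta \in U$. The induced cochain maps $\delta(f(\zeta)) : C_{g(\zeta)} \to C_{g(\zeta)}$ and $\delta(g(\zeta)) : C_{f(\zeta)} \to C_{f(\zeta)}$ depend polynomially, hence analytically, on $\zeta$, and their mapping cones remain Fredholm. Invoking \cite[Theorem 9.1]{KaNe14} then yields that $\zeta \mapsto |H(\Delta_{\delta(f(\zeta))})|$ and $\zeta \mapsto |H(\Delta_{\delta(g(\zeta))})|$ are analytic sections of the corresponding (trivializable) determinant line bundles over $U$. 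Composing these sections with $|\Phi|$ and with the analytic canonical identifications above shows that $\T{JT} \circ \al$ is an analytic map $U \to \cc^*$.

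The main obstacle is conceptual rather than computational: one must verify that the hypotheses of \cite[Theorem 9.1]{KaNe14} genuinely cover the present situation, including the possibility that the cohomology dimensions of the mapping cones $C_{f(\zeta)}$ and $C_{g(\zeta)}$ jump at isolated values of $\zeta$. This is handled in the cited reference by working with Knudsen--Mumford style determinant lines that are defined intrinsically at the cochain level, together with a local analytic trivialization of the torsion isomorphism in a neighbourhood of each degeneration point.
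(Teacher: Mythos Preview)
Your proposal is correct and matches the paper's approach: the paper does not prove Theorem~\ref{th:analyticity} at all but simply records it as ``a consequence of \cite[Theorem 9.1]{KaNe14}'', and your sketch is precisely an explanation of how that citation applies via the decomposition of $\T{JT}(\C X;f,g)$ in Definition~\ref{d:joytor}.
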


The following variant of Theorem \ref{th:analyticity} also holds:

\begin{thm}\label{t:conjoy}
The map
\[
\sF \ni (\mathcal{X};f,g)\to \T{JT}( \C X; f,g) \in \cc^*
\]
is continuous.
\end{thm}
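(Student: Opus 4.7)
The plan is to establish continuity locally by reducing to a finite-dimensional determinantal expression for the joint torsion. Recall the definition
\[
\T{JT}(\C X; f,g) = |H(\Delta_{\delta(f)})|^{-1} \ci |\Phi| \ci |H(\Delta_{\delta(g)})|.
\]
The mapping cone isomorphism $\Phi$ from \eqref{eq:mapconiso} is the same fixed matrix for every $(\C X; f,g) \in \sF$, so $|\Phi|$ contributes nothing to the continuity question; everything rests on continuous dependence of the two torsion isomorphisms $|H(\Delta_{\delta(f)})|$ and $|H(\Delta_{\delta(g)})|$.

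Fix a basepoint $(\C X_0; f_0, g_0) \in \sF$. The key step is to carry out, in a small operator-norm neighbourhood $U$ of this point, a simultaneous continuous finite-dimensional reduction of the five Fredholm complexes $\C X$, $C_f$, $C_g$, $C_{\delta(f)}$ and $C_{\delta(g)}$. Concretely, I would choose finite-dimensional subspaces $E^k \su X^k$ large enough to contain representatives of all the cohomology classes at the basepoint, together with bounded operators depending continuously on $(d,f,g) \in U$ (for example orthogonal projections onto perturbations of the $E^k$) that present each of the five complexes as quasi-isomorphic, via continuously varying chain maps, to a finite-dimensional complex supported on the $E^k$'s. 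The standard way to build such a reduction is to adjoin finite-rank corrections that make the ambient differentials acyclic on the orthogonal complements of $E^k$; the openness of Fredholmness in operator norm together with local constancy of the index guarantees that the same $E^k$ works throughout $U$ once it is chosen large enough at the basepoint, and the induced actions of $\delta(f)$, $\delta(g)$ on the residual finite-dimensional data are obtained from $f$ and $g$ by continuous linear operations.

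With this reduction in hand, each of $|H(\Delta_{\delta(f)})|$ and $|H(\Delta_{\delta(g)})|$ becomes a torsion isomorphism of a six-term exact sequence of finite-dimensional vector spaces whose structure maps depend continuously on $(d,f,g) \in U$. By direct inspection of Definition \ref{d:toriso}, the resulting determinantal numbers are rational in those structure maps and nowhere vanishing on $U$, hence continuous; composing with $|\Phi|$ and the canonical identifications $|C_f| \ot |C_f|^\da \cong \ff \cong |C_g| \ot |C_g|^\da$ yields continuity of $\T{JT}$ at the basepoint. The main difficulty is organisational rather than conceptual: one must ensure that a single choice of subspaces $E^k$ and continuously varying projections simultaneously stabilises all five Fredholm complexes and respects the commutation $fg = gf$. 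This is precisely the ingredient used in \cite{KaNe14} to prove Theorem \ref{th:analyticity}, with ``analytic'' replaced throughout by ``continuous''; relaxing analytic dependence to continuous dependence simplifies rather than complicates the verification.
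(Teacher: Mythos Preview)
Your proposal is correct and aligns with the paper's own treatment: the paper does not give an independent proof of this theorem at all, but merely states it as a variant of Theorem~\ref{th:analyticity}, implicitly deferring to the same argument from \cite{KaNe14}. Your sketch makes explicit exactly what that deferral amounts to --- run the finite-dimensional reduction of \cite{KaNe14} with ``continuous'' in place of ``analytic'' --- and is thus more detailed than, but entirely consistent with, the paper.
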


\begin{remark}
We could have defined the joint torsion in the context of $\zz/2\zz$-graded Fredholm complexes instead of in the more restricted context of finite $\zz$-graded Fredholm complexes. The reason for staying with finite $\zz$-graded Fredholm complexes is that we do not have a proof of Theorem \ref{th:analyticity} for $\zz/2\zz$-graded Fredholm complexes.
\end{remark}

\section{Joint torsion of commuting operators}\label{s:Joint torsion of commuting operators}
\emph{Throughout this section $A = (A_1,\ldots,A_n) \in \sL(\sH)^n$ will denote a commuting tuple of bounded operators on
a Hilbert space $\sH$.} Thus, we have the relation $A_i A_j - A_j A_i = 0$ for all $i,j \in \{1,\ldots,n\}$. Given $\la\in \cc^n$, $A-\la$ will denote the n-tuple $(A_1-\la_1,\ldots,A_n-\la_n) $. We will start by recalling some basic constructions and facts.

\subsection{The Koszul complex}\label{ss:Koszul}
Let $\La(\cc^n)$ denote the exterior algebra on $n$ generators $e_1,\ldots,e_n$. 

For each subset $I = \{i_1,\ldots,i_k\} \su \{1,\ldots,n\}$ with $i_1 < \ldots < i_k$, let $e_I := e_{i_1} \wlw e_{i_k} \in \La(\cc^n)$, where $\we : \La(\cc^n) \ti \La(\cc^n) \to \La(\cc^n)$ denotes the wedge product.

The exterior algebra is then a $\zz$-graded algebra with respect to the decomposition
\[
\La(\cc^n) = \op_{k \in \zz} \La^k(\cc^n),
\]
where $\La^k(\cc^n) := \{0\}$ for $k \notin \{0,\ldots,n\}$ and $\La^k(\cc^n) := \T{span}_\cc\big\{ e_I \, | \, I \su \{1,\ldots,n\} \, , \, |I| = k\big\}$ for $k \in \{0,\ldots,n\}$.

For each $j \in \{1,\ldots,n\}$, the interior multiplication with the $j^{\T{th}}$ generator is denoted by 
\[
\begin{split}
& \ep_j^* : \La(\cc^n) \to \La(\cc^n) \\ 
& \ep_j^* : e_I \mapsto \fork{ccc}{
0 & \T{for} & j \notin I \\
(-1)^{m-1} e_{i_1} \wlw \wih{e_{i_m}} \we e_{i_{m+1}} \wlw e_{i_k} & \T{for} & j = i_m. }
\end{split}
\]
This linear map has degree $-1$ with respect to the above $\zz$-grading.

\begin{dfn}
By the \emph{Koszul complex of $A$} we will understand the finite cochain complex of Hilbert spaces given by the following data:
\begin{enumerate}
\item The Hilbert space $K^k(A,\sH) := \sH \ot_\cc \La^{-k}(\cc^n)$ for each $k \in \zz$.
\item The differential of degree one,
\[
d_A := \sum_{j=1}^n A_j \ot \ep_j^* : K^k(A,\sH) \to K^{k+1}(A,\sH).
\]
\end{enumerate}
We will use the notation $K(A,\sH)$ for the Koszul complex and the notation $H^k(A,\sH)$, $k \in \zz$ for the cohomology groups of
$K(A,\sH)$.

We will say that a commuting n-tuple $A$ is {\em Fredholm} if $K(A,\sH)$ is  Fredholm. In this case, the \emph{index} of $A$ is the index of $K(A,\sH)$,
\[
\T{Ind}(A) := \sum_{k \in \zz} (-1)^k \T{dim}_{\cc}\big( H^k(A,\sH) \big)
\]
\end{dfn}

\begin{remark}$\mbox{}$
\begin{enumerate}
\item It is straightforward to see that $d_A^{k+1} d_A^k = 0$ for all $k \in \zz$.
\item When $A$ is Fredholm we have that $d_A^k : K^k(A,\sH) \to K^{k + 1}(A,\sH)$ has closed image for all $k \in \zz$. This is a consequence of \cite[Corollary 6.2]{Cur:FOD}.
\item This is not quite the conventional definition of the Koszul complex. The reason for using interior instead of exterior multiplication on the exterior algebra stems from the fact that we want the mapping cone of a bounded operator $B : \sH \to \sH$ to be isomorphic to the Koszul complex $K(B,\sH)$ (without the dimension shift). Indeed, with the present convention, we have the following (see \cite[Lemma 2.3]{KaNe12}):
\end{enumerate} 
\end{remark}

\begin{lemma}\label{l:mapkos}
Let $B : \sH \to \sH$ be a bounded operator such that $B A_j = A_j B$ for all $j \in \{1,\ldots,n\}$. Then the mapping cone of the cochain map
\[
K^*(B) := B \ot 1 : K^*(A,\sH) \to K^*(A,\sH)
\]
is cochain isomorphic to the Koszul complex $K^*\big( (B,A),\sH \big)$.
\end{lemma}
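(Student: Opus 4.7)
The plan is to construct an explicit cochain isomorphism at each degree and verify it intertwines the differentials; the whole thing will boil down to a direct sum decomposition of the exterior algebra according to whether or not basis elements contain the ``extra'' generator.

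First I would label the generators of $\Lambda(\cc^{n+1})$ appearing in $K((B,A),\sH)$ as $e_0,e_1,\ldots,e_n$, with $e_0$ the generator associated to $B$. The usual splitting of the exterior algebra,
\[
\Lambda^m(\cc^{n+1}) \;=\; \Lambda^m(\cc^n) \;\oplus\; e_0 \we \Lambda^{m-1}(\cc^n),
\]
yields, after tensoring with $\sH$ and setting $m = -k$, a direct sum decomposition
\[
K^k\big((B,A),\sH\big) \;=\; K^k(A,\sH) \;\oplus\; \big(e_0 \we K^{k+1}(A,\sH)\big).
\]
This identifies the right-hand side with $C^k_{K^*(B)} = K^{k+1}(A,\sH) \oplus K^k(A,\sH)$ via
\[
\Psi^k : (\beta,\alpha) \mapsto e_0 \we \beta + \alpha.
\]
Linearity and bijectivity are immediate.

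The substantive step is checking that $\Psi$ commutes with differentials. Using $d_{(B,A)} = B \ot \ep_0^* + \sum_{j=1}^n A_j \ot \ep_j^*$, I need the identities $\ep_0^*(\omega)=0$ and $\ep_0^*(e_0\we \omega)=\omega$ for $\omega \in \Lambda(\cc^n)$, together with the sign rule $\ep_j^*(e_0 \we \omega) = - e_0 \we \ep_j^*(\omega)$ for $j \geq 1$. The latter follows directly from the explicit formula for $\ep_j^*$ in the definition: inserting $e_0$ at the front shifts the ordinal index of every surviving generator up by one, producing the extra minus sign. Feeding this in, on $\alpha \in K^k(A,\sH)$ one obtains $d_{(B,A)}\alpha = d_A\alpha$, and on $e_0 \we \beta$ with $\beta \in K^{k+1}(A,\sH)$ one obtains $d_{(B,A)}(e_0 \we \beta) = B\beta - e_0 \we d_A\beta$. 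With respect to $\Psi^k$ this is the matrix
\[
\begin{pmatrix} -d_A & 0 \\ B & d_A \end{pmatrix}
\]
on $K^{k+1}(A,\sH) \oplus K^k(A,\sH)$, which is exactly the mapping cone differential of $K^*(B) = B \ot 1$.

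The commutation relations $BA_j = A_jB$ enter in two places: they ensure that $K^*(B)$ is a well-defined cochain map on $K(A,\sH)$, and they are what makes $d_{(B,A)}^2 = 0$ so that $K((B,A),\sH)$ is a complex at all. The only real obstacle is keeping track of signs, and the lemma's unusual convention of using interior rather than exterior multiplication is precisely what makes the off-diagonal entry come out as $B$ (with no extra $(-1)^k$) and the diagonal sign $-d_A$ match the mapping cone sign convention from Subsection~\ref{ss:frecom}. No further input is required.
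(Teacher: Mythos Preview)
Your argument is correct: the splitting $\Lambda^m(\cc^{n+1}) = \Lambda^m(\cc^n) \oplus e_0 \we \Lambda^{m-1}(\cc^n)$ together with the sign identity $\ep_j^*(e_0\we\omega) = -\,e_0\we\ep_j^*(\omega)$ for $j\ge 1$ gives exactly the mapping-cone differential, and your check of this identity from the explicit formula for $\ep_j^*$ is accurate.

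There is nothing to compare against here, since the paper does not actually prove this lemma---it only quotes it with a reference to \cite[Lemma~2.3]{KaNe12}. Your self-contained verification is precisely the kind of computation that reference would contain, and it makes explicit the point the authors stress in the remark preceding the lemma: the interior-multiplication convention is what forces the off-diagonal entry to be $B$ (no $(-1)^k$) and the upper-left entry to be $-d_A$, matching the paper's mapping-cone conventions without a degree shift.
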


Recall that the \emph{Taylor spectrum} of $A$ is the set
\[
\T{Sp}(A) := \big\{\la \in \cc^n \, | \, H^*(A - \la,\sH) \neq \{0\} \big\}
\]
The Taylor spectrum is a compact non-empty subset of $\cc^n$, \cite[Theorem 3.1]{Tay:JSC}. The essential Taylor spectrum of $A$ is the set
\[
\spe := \big\{\la \in \cc^n \, | \, {A-\la} \T{ is not Fredholm} \big\}
\]

The unital commutative ring of germs of holomorphic functions on neighborhoods of $\T{Sp}(A)$ will be denoted by $\hoc$. The elements in $\hoc$ are thus equivalence classes of holomorphic functions $f : U \to \cc$, where $U$ is an open subset of $\cc^n$ containing $\T{Sp}(A)$ . 

The commuting tuple of bounded operators $A = (A_1,\ldots,A_n)$ on the Hilbert space $\sH$ provides us with a holomorphic functional calculus. To be more precise, the following holds (see \cite[Theorem 4.8]{Tay:ACS}).

\begin{thm}\label{t:anafun}
Let $\C A \su \sL(\sH)$ denote the smallest unital $\cc$-algebra which contains the bounded operators $A_1,\ldots,A_n$. There exists a unital homomorphism $\hoc \to \C A''$ (the double commutant of $\C A$), $f \mapsto f(A)$ such that $z_i \mapsto A_i$. Furthermore, whenever $f = (f_1,\ldots,f_m) : \T{Sp}(A) \to \cc^m$ is holomorphic (i. e. $f_k\in \hoc ,\ k=1,\ldots,m$) the following identity holds:
\[
\T{Sp}(f(A)) = f\big(\T{Sp}(A) \big).
\]
\end{thm}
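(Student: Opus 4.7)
The theorem is Taylor's; see \cite{Tay:ACS}. I sketch the structure of its proof. The plan is to construct $f(A)$ for $f\in \C O(U)$, with $U$ an open neighbourhood of $\T{Sp}(A)$, by a multivariable Cauchy-Weil integral formula, and then to deduce the spectral mapping identity from compatibility of calculi and two parametrix arguments.

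The core analytic step is to produce a bounded holomorphic parametrix for the Koszul complex $K(A-z,\sH)$ on $\cc^n\sem\T{Sp}(A)$: a holomorphic family of bounded operators $s(z):K^{k+1}(A-z,\sH)\to K^k(A-z,\sH)$ satisfying $d_{A-z}\,s(z)+s(z)\,d_{A-z}=1$. Pointwise existence follows from acyclicity combined with the closed-range property. Globalizing into a bounded holomorphic family is the deep input and is carried out by applying Cartan's theorem~B to a coherent sheaf built from the Koszul data on the Stein open set $\cc^n\sem\T{Sp}(A)$. With such an $s$ in hand, fix a Stein neighbourhood $V$ of $\T{Sp}(A)$ with $\ov V\su U$, assemble an $\sL(\sH)$-valued Cauchy-Weil kernel $\omega(A,z)$ out of $s$ and the standard $\bar\partial$-machinery, and define
\[
f(A) := \frac{1}{(2\pi i)^n}\int_{\partial V} f(z)\,\omega(A,z).
\]
The integrand commutes with every operator commuting with each $A_i$, so $f(A)\in \C A''$, and Stokes' theorem shows that $f(A)$ depends only on the germ of $f$ along $\T{Sp}(A)$, giving a well-defined map $\hoc\to \C A''$.

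The algebra-homomorphism properties are then formal. Linearity is immediate; $z_i(A)=A_i$ is checked by deforming $\partial V$ toward the boundary of a large polydisc and comparing with the iterated one-variable Cauchy integral; multiplicativity $(fg)(A)=f(A)g(A)$ follows from a Fubini argument on two nested contours $V_1\Subset V_2$ using a resolvent-type identity satisfied by $\omega$ along the diagonal.

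For the spectral mapping statement, the key intermediate fact is compatibility of calculi $(g\ci f)(A)=g(f(A))$ for $g$ holomorphic near $f(\T{Sp}(A))$, which is itself another iterated integral-formula identity, this time using the parametrices for both $K(A-z,\sH)$ and $K(f(A)-w,\sH)$ together with the change of variable $w=f(z)$. From compatibility, the inclusion $\T{Sp}(f(A))\su f(\T{Sp}(A))$ is obtained by applying the parametrix construction of the second paragraph to $K(f(A)-\mu,\sH)$ on a Stein neighbourhood of $\T{Sp}(A)$: if $\mu\notin f(\T{Sp}(A))$, the $m$-tuple $f-\mu$ has no common zero there, so $K(f(A)-\mu,\sH)$ admits a bounded contraction and $\mu\notin\T{Sp}(f(A))$. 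The reverse inclusion uses, for $\la\in\T{Sp}(A)$, the local factorization $f(z)-f(\la)=H(z)(z-\la)$ in $\C O_\la^m$ arising from $f_j-f_j(\la)$ vanishing at $\la$, combined with compatibility: a bounded Koszul contraction of $K(f(A)-f(\la),\sH)$ would, after composing with $H(A)$, contract $K(A-\la,\sH)$ as well, contradicting $\la\in\T{Sp}(A)$. The main obstacle throughout is the construction of the bounded holomorphic parametrix in the second paragraph; once that exists every remaining step reduces to formal manipulations of the integral formula.
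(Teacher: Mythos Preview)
The paper does not prove this theorem; it simply records the statement with a citation to \cite[Theorem 4.8]{Tay:ACS}. Your proposal does the same and then outlines Taylor's argument, so there is nothing substantive to compare.

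One technical remark on your sketch of the inclusion $f(\T{Sp}(A))\su\T{Sp}(f(A))$: the factorization $f(z)-f(\la)=H(z)(z-\la)$ must be produced globally on a Stein neighbourhood of $\T{Sp}(A)$ (a Hefer--Taylor decomposition), not merely in the stalk $\C O_\la$, since otherwise $H(A)$ is not defined by the functional calculus you have just built. Moreover, the passage from exactness of $K(f(A)-f(\la),\sH)$ to exactness of $K(A-\la,\sH)$ is not literally ``compose a contraction with $H(A)$''; it runs through the enlarged complex $K\big((A-\la,f(A)-f(\la)),\sH\big)$, which is chain-isomorphic to $K(A-\la,\sH)\ot\La(\cc^m)$ via the change of generators $e_{n+j}\mapsto e_{n+j}-\sum_k H_{jk}(A)e_k$, combined with the projection property of the Taylor spectrum. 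These are standard refinements and do not affect the correctness of your outline.
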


\medskip

\noindent {In particular, the above holomorphic functional calculus allows us to consider the Hilbert space $\sH$ as a left module over $\hoc$. }

\begin{notation}\label{n:hol}$\mbox{}$
\begin{enumerate}
\item Given $f\in \hoc$, we will denote the operator $f(A)$ on $\sH$ by $f$. Given $g\in \hoc^m$, $K^*(g,\sH)$ will denote the Koszul complex of $g(A)=(g_1(A),\ldots,g_m(A))$ and $H^*(g,\sH)$ will denote the cohomology of $K^*(g,\sH)$.
\item
For each $\la \in \T{Sp}(A)$,  $\sH_\la$ denotes the localization of the module $\sH$ with respect to the prime ideal $\G p_\la :=\{f \in \hoc\mid f(\la) = 0\} \su \hoc$.

\item Let $m \in \nn$ and suppose that we are given $g = (g_1,\ldots,g_m)\in \hoc^m$. We set
\[
Z(g) := \big\{ \la \in \T{Sp}(A) \mid g_1(\la) = \ldots = g_m(\la) = 0 \big\}.
\]
\end{enumerate}
\end{notation}

\begin{example}\label{spectrum:polydisc}(\cite{Cur:FOD}) Let $H^2(\B D^n)$ be the Hardy-space over the polydisc $\mathbb{D}^n=\{z\in\cc^n\mid |z_j| < 1, \T{ for all } j = 1,\ldots,n \}$, and $A=(T_{z_1},\ldots,T_{z_n})$ be the $n$-tuple of multiplication operators by the coordinate functions $z_1,\ldots,z_n$ on $\cc^n$. Then,
\begin{enumerate}
\item $\T{Sp}(A)=\ov{\mathbb{D}^n}$;
\item for a function $f$ holomorphic in a neighbourhood of $\ov{\mathbb{D}^n}$, $f(A)$ coincides with the Toeplitz operator $T_f$ of multiplication by $f$;
\item $\T{Sp}_{\T{ess}}(A)=\partial \mathbb{D}^n$;
\item for $\la \in \B D^n$, $\T{Ind}(A-\la)= 1$;
\item for $\la \in \B D^n$, $H^k\big(A - \la,H^2(\B D^n) \big) = \{0\}$ for all $k \in \zz\sem \{0\}$.
\end{enumerate}
\end{example}
The next result is a consequence of \cite[Proposition 4.5]{KaNe14}.

\begin{prop}\label{p:deckos}
Suppose that $g\in \hoc^m$ is such that $g(A) = (g_1(A),\ldots,g_m(A))$ is Fredholm. Then $Z(g)$ is finite and the homomorphism of modules $\sH \to \op_{\la \in Z(g)} \sH_\la$, $\xi \mapsto \{\xi/1\}_{\la \in Z(g)}$, induces an isomorphism of cohomology groups,
\[
H^*\big( g,\sH \big) \cong H^*\big( g,\op_{\la \in Z(g)} \sH_\la \big) \cong \op_{\la \in Z(g)} H^*\big( g,\sH_\la \big).
\]
\end{prop}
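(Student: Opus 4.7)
The plan is to reduce the proposition to a standard Koszul null-homotopy together with the decomposition of a finite-length module into its localizations at the maximal ideals in its support. First I would observe that each $g_j(A)$ acts as zero on $H^*(g,\sH)$: letting $\ep_j$ denote exterior multiplication by the $j$-th generator of $\La(\cc^m)$, the anticommutation
\[
\{d_{g(A)},\, 1 \ot \ep_j\} \;=\; g_j(A) \ot 1
\]
holds on $K(g,\sH)$, so $g_j(A) \ot 1$ is null-homotopic and annihilates cohomology. Viewing $\sH$ as an $\hoc$-module via the functional calculus of Theorem \ref{t:anafun}, it follows that $H^*(g,\sH)$ is annihilated by $\inn{g} \su \hoc$, and in particular is a module over $\hoc/\inn{g}$.

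Next I would combine this with the Fredholm hypothesis, which gives $\dim_\cc H^*(g,\sH) < \infty$. Thus $H^*(g,\sH)$ has finite length and is supported on a finite set of maximal ideals of $\hoc$, each containing $\inn{g}$. The spectral mapping statement in Theorem \ref{t:anafun} forces the relevant maximal ideals to be among the evaluation ideals $\G p_\la$ with $\la \in Z(g)$, and a direct check (essentially the content of \cite[Proposition~4.5]{KaNe14}) identifies them with the whole of $Z(g)$. In particular $Z(g)$ is finite.

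For the cohomological statement, exactness of localization at $\G p_\la$ on the category of $\hoc$-modules gives $K(g, \sH_\la) \cong K(g, \sH) \ot_{\hoc} \hoc_{\G p_\la}$, and therefore
\[
H^*(g, \sH_\la) \;\cong\; H^*(g, \sH)_\la.
\]
Since a finite-length module over a commutative ring is canonically the direct sum of its localizations at the maximal ideals in its support, one obtains
\[
H^*(g, \sH) \;\cong\; \op_{\la \in Z(g)} H^*(g, \sH)_\la \;\cong\; \op_{\la \in Z(g)} H^*(g, \sH_\la) \;\cong\; H^*\big(g,\, \op_{\la \in Z(g)} \sH_\la\big),
\]
where the last isomorphism uses that $H^*(g,-)$ commutes with finite direct sums. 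Tracing the construction, this composite is precisely the map induced by $\xi \mapsto \{\xi/1\}_\la$.

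The hardest point is the finiteness of $Z(g)$ itself: because $\hoc$ is not \emph{a priori} Noetherian and its maximal-ideal spectrum is rich, pinning down exactly which maximal ideals occur in the support of $H^*(g,\sH)$ (and matching them with the geometric points of $Z(g)$) is the technical heart of the matter, and is precisely what \cite[Proposition~4.5]{KaNe14} is invoked to handle. Once this is granted, the rest of the argument is a routine application of Koszul-homotopy and the structure theory of finite-length modules.
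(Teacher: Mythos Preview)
The paper does not actually prove this proposition; it simply records it as a consequence of \cite[Proposition~4.5]{KaNe14} with no further argument. Your sketch therefore supplies substantially more than the paper does, and the commutative-algebra skeleton you give (Koszul null-homotopy to show $\inn{g}$ annihilates cohomology, finite length of $H^*(g,\sH)$, exactness of localization, and the primary decomposition of a finite-length module) is the expected content of the cited result and is correct.

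One point deserves sharpening. Your route to the finiteness of $Z(g)$ is: the support of $H^*(g,\sH)$ is a finite set of maximal ideals, and this set coincides with $\{\G p_\la : \la \in Z(g)\}$. The inclusion $\T{supp}\,H^*(g,\sH) \subseteq Z(g)$ follows from the null-homotopy, but the reverse inclusion --- that $H^*(g,\sH_\la) \neq 0$ for \emph{every} $\la \in Z(g)$ --- is not a ``direct check'': it requires feeding the analytic information $\la \in \T{Sp}(A)$ (i.e.\ $H^*(A-\la,\sH)\neq 0$) back into the localized Koszul complex, and this is where the genuine work in \cite{KaNe14} lies. You do flag this in your final paragraph as the technical heart, so there is no gap, but the earlier phrase ``a direct check'' undersells exactly the step you later identify as hardest. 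Apart from this wording issue, your proposal is sound and matches what the paper outsources to the reference.
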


\subsection{Joint torsion transition numbers}
Let $m \in \nn$ and let $\inn{m} := \{1,\ldots,m\}$. Fix an element $g=(g_1,\ldots,g_{m})\in \hoc^{m}$. For a subset $J = \{j_1,\ldots,j_k\} \su \inn{m}$ with $1 \leq j_1 < \ldots < j_k \leq m$, we define
\[
g_J := (g_{j_1},\ldots, g_{j_k}) \in \hoc^k
\]
The following assumption will remain in effect throughout this subsection:
\medskip

\noindent{\emph{Let $i,j \in \inn{m}$ and suppose that the commuting tuples $g_{\inn{m}\sem \{i\}}(A)$ and $g_{\inn{m} \sem \{j\}}(A)$ are Fredholm.}}
\medskip

It then follows by Lemma \ref{l:mapkos} that the mapping cones of the commuting cochain maps
\begin{equation}\label{eq:cocfre}
K^*(g_i) \T{ and } K^*(g_j) : K^*( g_{\inn{m}\sem \{i,j\}},\sH) \to K^*( g_{\inn{m}\sem \{i,j\}},\sH)
\end{equation}
are Fredholm complexes. In particular, the following definition makes sense:

\begin{dfn}\label{d:joytra}
The \emph{joint torsion transition number} of $g(A)$ (in position $i,j$) is defined as the joint torsion of the cochain maps in \eqref{eq:cocfre}. It is denoted by
\[
\tau_{i,j}(g(A)) := \T{JT}\big( K^*\big( g_{\inn{m}\sem \{i,j\}},\sH\big) ; g_i , g_j \big) \in \cc^*
\]
\end{dfn}


%

\subsubsection{The torsion line bundle}\label{ss:Torsion line bundle}
As in the last subsection, let $g = (g_1,\ldots,g_m) \in \hoc^m$ be fixed. Recall that $\inn{m} := \{1,\ldots,m\}$.

For each $i \in \{1,\ldots,m\}$, define the open subset
\[
U_i=\big\{\mu\in \cc^m\mid (g - \mu)_{\inn{m}\sem\{i\}}(A) \T{ is Fredholm}\big\}
\]

For each $i,j \in \{1,\ldots,m\}$ and each $\mu \in U_i\cap U_j$, we then have the joint torsion transition number
\[
\tau_{i,j}(g(A))(\mu) := \tau_{i,j}\big( g(A) - \mu \big) \in \cc^*
\]
See Definition \ref{d:joytra}.

As a consequence of \cite[Lemma 3.3.3]{Kaa:JTS} these functions satisfy the transition identities of a line bundle, thus
\[
\tau_{i,j}\big(g(A)\big) \cd \tau_{j,k}\big( g(A)\big) = \tau_{i,k}\big(g(A)\big) \q \T{and} \q 
\tau_{i,j}\big(g(A)\big) = \tau_{j,i}\big(g(A)\big)^{-1},
\]
for all $i,j,k \in \{1,\ldots,m\}$.

Furthermore, by Theorem \ref{th:analyticity}, each $\tau_{i,j}(g(A)) : U_i\cap U_j \to \cc^*$ is analytic, hence the following makes sense:

\begin{dfn}
The \emph{torsion line bundle} of $g(A)$ is the analytic line bundle on $U :=\cup_{i=1}^m U_i \su \cc^m$ with transition functions $\tau_{i,j}(g(A)) : U_i \cap U_j \to \cc^*$, $i,j=1,\ldots, m$.
\end{dfn}

\subsection{Local indices}\label{ss:local indices}
Let $\Om := \T{Sp}(A)^\ci$ denote the interior of the Taylor spectrum and let $\C O$ denote the sheaf of analytic functions on $\Om$. Fix an element $g = (g_1,\ldots,g_n) \in \hoc^n$. Remark that the number of holomorphic functions in $g$ coincides with the number of operators in the tuple $A = (A_1,\ldots,A_n)$.
\medskip

\noindent {\it Throughout this subsection we suppose that the commuting tuple $g(A) = (g_1(A),\ldots,g_n(A))$ is Fredholm.}
\medskip

\noindent This means precisely that the intersection $Z(g) \cap \spe$ is the trivial set. In particular, there is a well-defined index $\T{Ind}(A - \la) \in \zz$ for each $\la \in Z(g)$.

Now, applying Proposition \ref{p:deckos}, it also follows from the Fredholmness of $g(A)$ that the set of common zeroes $Z(g) \su \T{Sp}(A)$ is finite. In particular, each element $\la \in \Om \cap Z(g)$ is an isolated zero for the holomorphic map $g|_{\Om} : \Om \to \cc^n$. This is equivalent to the finite dimensionality of the quotient vector space $\C O_\la/\inn{g}_\la$, where $\inn{g}_\la \su \C O_\la$ denotes the ideal generated by the analytic functions $g_1,\ldots,g_n : \Om \to \cc$ in the stalk $\C O_\la$ of the sheaf $\C O$ at $\la$. See \cite[Chapter 5,\S 1.2]{GrRe:CAS}.

\begin{dfn}
The \emph{local degree}, or (multiplicity) of $g$ at $\la \in \Om$ is the dimension $\T{dim}_{\cc}\big(\C O_\la/\inn{g}_\la \big) \in \nn \cup \{0\}$. The local degree is denoted by $m_\la(g)$.
\end{dfn}

\begin{dfn}
The \emph{local index} of $g(A)$ at $\la \in \spa$ is the Euler characteristic of the Koszul complex $K(g,\sH_\la)$. The local index is denoted by $\T{Ind}_\la(g(A)) \in \zz$, thus
\[
\T{Ind}_\la(g(A)) := \sum_{i\in\zz} (-1)^{i} \T{dim}_{\cc}\big( H^i(g,\sH_\la) \big).
\]
\end{dfn}

Remark that the finite dimensionality of the cohomology groups $H^i(g,\sH_\la)$, $i \in \zz$ is non-obvious. This is a consequence of Proposition \ref{p:deckos} which also implies the identity
\[
\T{dim}_{\cc}H^i(g(A),\sH) = \sum_{\la \in Z(g)} \T{dim}_{\cc}\big( H^i(g,\sH_\la)\big).
\]
In particular we have that
\[
\T{Ind}(g(A)) = \sum_{\la \in Z(g)} \T{Ind}_\la(g(A)).
\]

The following "local" index theorem therefore yields the "global" index theorem of \cite[Theorem 10.3.13]{EsPu:SDA}. 

\begin{thm}\label{t:locind}(see \cite[Theorem 8.5]{KaNe12})
Suppose that $g(A)$ is Fredholm and that $\la \in Z(g)$. The local index at $\la$ is then given by
\[
\T{Ind}_\la(g(A)) = m_\la(g) \cd \T{Ind}(A - \la).
\]
\end{thm}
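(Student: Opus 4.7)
My plan is to prove Theorem \ref{t:locind} by identifying the Koszul cohomology $H^*(g,\sH_\la)$ with algebraic $\T{Tor}$-groups over the regular local ring $\C O_\la$ and then exploiting additivity of Euler characteristics along a composition series of $\C O_\la/\inn{g}_\la$. The computation then reduces to the case of the linear tuple $A-\la$, which yields the index $\T{Ind}(A-\la)$ directly.

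First I would observe that since $m_\la(g) = \T{dim}_{\cc}(\C O_\la/\inn{g}_\la)$ is finite, the ideal $\inn{g}_\la \su \C O_\la$ is primary to the maximal ideal $\G m_\la$. Because $\C O_\la$ is the stalk of germs of holomorphic functions in $n$ variables, it is a regular local ring of Krull dimension $n$, hence Cohen--Macaulay. Therefore $g_1,\ldots,g_n$ form a system of parameters, and in a Cohen--Macaulay local ring every system of parameters is a regular sequence. Consequently the Koszul complex $K(g,\C O_\la)$, viewed homologically, is a finite free resolution of $\C O_\la/\inn{g}_\la$. Since localization at $\G p_\la$ is exact and literally commutes with the Koszul construction (the terms agree on the nose, differentials included), tensoring this resolution with $\sH_\la$ over $\C O_\la$ yields $K(g,\sH_\la)$ up to re-grading, whence
\[
H^{-i}(g,\sH_\la) \cong \T{Tor}_i^{\C O_\la}(\C O_\la/\inn{g}_\la,\sH_\la) \q \forall i \in \zz.
\]

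Next, since $\C O_\la/\inn{g}_\la$ is a finite-dimensional $\cc$-vector space of dimension $m_\la(g)$ on which $\G m_\la$ acts nilpotently, it admits a filtration $0 = M_0 \su M_1 \su \ldots \su M_{m_\la(g)} = \C O_\la/\inn{g}_\la$ with every subquotient isomorphic to the residue field $\cc = \C O_\la/\G m_\la$. The long exact $\T{Tor}$-sequence in the first variable associated to each short exact sequence $0 \to M_{k-1} \to M_k \to \cc \to 0$, combined with the finite dimensionality of every $\T{Tor}_i^{\C O_\la}(\cc,\sH_\la)$ (established just below), implies inductively the finite dimensionality and additivity of Euler characteristics, giving
\[
\sum_i (-1)^i \T{dim}_{\cc} \T{Tor}_i^{\C O_\la}(\C O_\la/\inn{g}_\la,\sH_\la) = m_\la(g) \cd \sum_i (-1)^i \T{dim}_{\cc} \T{Tor}_i^{\C O_\la}(\cc,\sH_\la).
\]
The coordinate differences $z_1-\la_1,\ldots,z_n-\la_n$ form a regular system of parameters for $\C O_\la$, so the same Koszul-equals-resolution argument identifies $H^{-i}(A-\la,\sH_\la)$ with $\T{Tor}_i^{\C O_\la}(\cc,\sH_\la)$. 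Since $Z(A-\la) = \{\la\}$, Proposition \ref{p:deckos} gives $H^*(A-\la,\sH) \cong H^*(A-\la,\sH_\la)$, and Fredholmness of $A-\la$ ensures these groups are finite dimensional with alternating sum equal to $\T{Ind}(A-\la)$. Putting everything together yields the desired identity $\T{Ind}_\la(g(A)) = m_\la(g) \cd \T{Ind}(A-\la)$.

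The step I expect to be the main obstacle is the precise identification of the Hilbert-space-based Koszul complex $K(g,\sH_\la)$ with the algebraic derived tensor product $K(g,\C O_\la) \ot_{\C O_\la}^{\B L} \sH_\la$: one must verify that the $\hoc$-module structure on $\sH$ induced by Taylor's holomorphic functional calculus (Theorem \ref{t:anafun}) is compatible, after localization at $\G p_\la$, with the algebraic action of $\C O_\la$ used to form the Koszul differential, and that this compatibility survives both the $\G p_\la$-localization procedure of \cite[Proposition 4.5]{KaNe14} recalled in Proposition \ref{p:deckos} and the homological computation of $\T{Tor}$. Once this bridge between the operator-theoretic and purely algebraic pictures is in place, the remainder of the argument is a standard application of the Serre intersection formula.
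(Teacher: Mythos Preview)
The paper does not prove this theorem; it is quoted with a reference to \cite[Theorem 8.5]{KaNe12} and then used as input in later sections. So there is no in-paper argument to compare against, and your proposal should be read as a reconstruction of the proof in the cited reference.

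Your outline is the correct one and is essentially Serre's intersection-multiplicity argument transplanted to this setting: the regular local ring $\C O_\la$ is Cohen--Macaulay, $g_1,\ldots,g_n$ is a system of parameters hence a regular sequence, so the Koszul complex $K(g,\C O_\la)$ resolves $\C O_\la/\inn{g}_\la$; tensoring with $\sH_\la$ identifies $H^{-i}(g,\sH_\la)$ with $\T{Tor}_i^{\C O_\la}(\C O_\la/\inn{g}_\la,\sH_\la)$; a composition series of length $m_\la(g)$ with simple quotients $\cc$ reduces to the case of the coordinate tuple $z-\la$, which by Proposition~\ref{p:deckos} recovers $\T{Ind}(A-\la)$. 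This is precisely the strategy carried out in \cite{KaNe12}.

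The obstacle you single out is the right one, and it is not merely cosmetic. The module $\sH_\la$ is, by definition, the localization of $\sH$ at the prime $\G p_\la \su \hoc$, so it is a priori a module over $\hoc_{\G p_\la}$, not over $\C O_\la$; to compute $\T{Tor}^{\C O_\la}$ you need an $\C O_\la$-module structure compatible with the Koszul differentials. Two remarks make this tractable. First, the Koszul complex $K(g,\sH_\la)$ only uses the action of the specific elements $g_1,\ldots,g_n$ (respectively $z_j-\la_j$), so the complex itself is insensitive to which ambient ring you name. Second, the ring map $\hoc_{\G p_\la} \to \C O_\la$ induces an isomorphism on the artinian quotients $\hoc_{\G p_\la}/\inn{g} \cong \C O_\la/\inn{g}_\la$ (both are finite-dimensional local $\cc$-algebras supported at $\la$), so the length $m_\la(g)$ and the composition-series argument transfer. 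In \cite{KaNe12} this bridge is handled by showing directly that the action of $\hoc$ on the localized cohomology factors through $\C O_\la$ (this is the content of \cite[Proposition 4.4]{KaNe12}, invoked in the present paper in the proof of Lemma~\ref{l:detpoi}); once that is in place your $\T{Tor}$-identification and the filtration argument go through verbatim.
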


\begin{remark}
By homotopy invariance of the Fredholm index, $\T{Ind}(A - \la) = 0$ when $\la \in \pa (\spa )\cap (\cc^n\sem \spe)$. The right hand side of the equation in the above theorem should therefore be understood as $0$ in this case, even though the local degree, $m_\la(g)$, is \emph{not} defined.
\end{remark}

\section{Multiplicative Lefschetz numbers}\label{s:Multiplicative Lefschetz numbers}
Let $\C X$ be a Fredholm complex over a fixed field $\B F$ of characteristic $0$, and let $f : \C X \to \C X$ be a cochain map. 
\medskip

\emph{Suppose that the induced maps $H^{+}(f) : H^{+}(\C X) \to H^{+}(\C X)$ and $H^{-}(f) : H^{-}(\C X) \to H^{-}(\C X)$ are invertible.}
\medskip

\begin{dfn}
The \emph{multiplicative Lefschetz number} of $f : \C X \to \C X$ is the invertible element
\[
M(\C X; f) := \frac{\T{det}( H^{+}(f)) }{\T{det}( H^{-}(f))} \in \ff^*
\]
\end{dfn}

\subsection{Lefschetz numbers of Koszul complexes}\label{ss:Lefschetz numbers of Koszul complexes}
Let $A = (A_1,\ldots,A_n)$ be a commuting tuple of bounded operators on a Hilbert space $\sH$, and let $h_1,\ldots,h_{n-1},f,g\in \hoc $. We will use the notation
\[
(h,f) :=(h_1,\ldots,h_{n-1},f)\in \hoc^n
\]
(and similarly for $(h,g)$). Throughout this subsection we will suppose that:
\medskip

\noindent \emph{The sets $Z(h,f) \cap \T{Sp}_{\T{ess}}(A)$ and $Z(h,g) \cap \T{Sp}_{\T{ess}}(A)$ and $Z(h,f,g)$ are empty.}
\medskip

It follows from this assumption that the Koszul complexes $K\big( (h,f), \sH \big)$ and $K\big( (h,g), \sH \big)$ are Fredholm. Furthermore, we have that the cohomology of the Koszul complex $K\big( (h,f,g),\sH \big)$ is trivial.

Notice now that $f$ and $g$ induce cochain maps
\begin{equation}\label{eq:koscha}
\begin{split}
& f := K^*(f) : K\big( (h,g),\sH \big) \to K\big( (h,g),\sH \big) \q \T{and} \\
& g := K^*(g) : K\big( (h,f),\sH \big) \to K\big( (h,f),\sH \big)
\end{split}
\end{equation}
of Koszul complexes by means of the holomorphic functional calculus. As a consequence of the exactness of $K\big( (h,f,g),\sH \big)$ we obtain that the induced maps $H^*(f) : H^*\big( (h,g),\sH \big) \to H^*\big( (h,g),\sH\big)$ and $H^*(g) : H^*\big( (h,f), \sH\big) \to H^*\big( (h,f), \sH\big)$ are invertible.

The involved quantities of the next proposition are therefore well-defined.

\begin{thm}\label{t:nonsin}
The following identities hold,
\[
\T{JT}\big( K(h,\sH) ; f,g \big) = 
\frac{M\big(K\big( (h,g),\sH \big); f \big)}{M\big(K\big( (h,f),\sH \big) ; g \big)} =
\frac{\prod_{\la \in Z(h,g)} f(\la)^{m_\la(h,g) \cd \T{Ind}(A - \la)}}
{\prod_{\mu \in Z(h,f)} g(\mu)^{m_\mu(h,f) \cd \T{Ind}(A - \mu)}}.
\]
\end{thm}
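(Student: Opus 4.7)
The plan is to reduce the first equality to a direct application of Lemma \ref{lemma:det}, and the second to the local decomposition provided by Proposition \ref{p:deckos} combined with the local index theorem \ref{t:locind}.

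For the first equality, I would begin by identifying the mapping cones. Lemma \ref{l:mapkos} yields $C_f \cong K((h,f),\sH)$ and $C_g \cong K((h,g),\sH)$, and, applied once more to $\delta(g) : C_f \to C_f$ and $\delta(f) : C_g \to C_g$, identifies both double mapping cones $C_{\delta(g)}$ and $C_{\delta(f)}$ with $K((h,f,g),\sH)$. The hypothesis $Z(h,f,g) = \emptyset$ forces this last Koszul complex to be acyclic, so the six term exact sequences $H(\Delta_{\delta(g)})$ and $H(\Delta_{\delta(f)})$ both degenerate to the exact triangle $W \to W \to 0$ considered in Lemma \ref{lemma:det}. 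That lemma then computes
\[
|H(\Delta_{\delta(g)})| = \frac{\det H^-(g|_{H^*((h,f),\sH)})}{\det H^+(g|_{H^*((h,f),\sH)})} = M\bigl(K((h,f),\sH);g\bigr)^{-1},
\]
and symmetrically for $|H(\Delta_{\delta(f)})|$. The isomorphism $|\Phi|$ appearing in Definition \ref{d:joytor} is induced from $\Phi$ by passing to the (vanishing) cohomologies of the two acyclic double cones, and therefore becomes the identity $\ff \to \ff$ under the canonical identifications $|C_{\delta(g)}| \cong \ff \cong |C_{\delta(f)}|$. Assembling these factors as in Definition \ref{d:joytor} yields the first identity of the theorem.

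For the second equality, Proposition \ref{p:deckos} provides an $\hoc$-linear isomorphism $H^*((h,g),\sH) \cong \bigoplus_{\la \in Z(h,g)} H^*((h,g),\sH_\la)$, which in particular commutes with the action of $f$ obtained from the holomorphic functional calculus. On each local summand I would write $f = f(\la) + (f - f(\la))$ and argue that $f - f(\la)$ acts nilpotently: since $\la$ is an isolated common zero of the $n$ germs $(h,g)$, the analytic Nullstellensatz places the maximal ideal $\G p_\la$ inside the radical of $\inn{(h,g)}$ in the local stalk, so some power of $f - f(\la)$ lies in $\inn{(h,g)}$, and this ideal annihilates the Koszul cohomology $H^*((h,g),\sH_\la)$. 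Consequently $\det(f|_{H^\pm((h,g),\sH_\la)}) = f(\la)^{\dim_\cc H^\pm((h,g),\sH_\la)}$, so
\[
M\bigl(K((h,g),\sH);f\bigr) = \prod_{\la \in Z(h,g)} f(\la)^{\T{Ind}_\la((h,g)(A))},
\]
and Theorem \ref{t:locind} replaces the exponent by $m_\la(h,g) \cd \T{Ind}(A - \la)$. The symmetric computation for $M(K((h,f),\sH);g)$ finishes the proof.

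The main technical subtlety I expect lies in the signs. The torsion isomorphism carries the non-standard sign $\mu(\C V)$ of Definition \ref{d:toriso}, so one must verify carefully that when the double cones $C_{\delta(g)}$ and $C_{\delta(f)}$ degenerate to acyclic complexes these signs cancel cleanly, ensuring that $|\Phi|$ truly reduces to $+1$ rather than $-1$ and that Lemma \ref{lemma:det} applies without correction. The local nilpotence step, by contrast, is routine once the stalk of $\hoc$ at $\la \in \Om$ is identified with $\C O_\la$.
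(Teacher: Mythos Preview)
Your proposal is correct and follows the paper's proof closely: the first identity via Lemma~\ref{lemma:det} and the acyclicity of $K((h,f,g),\sH)$, the second via a pointwise decomposition of the cohomology, nilpotency of $f - f(\la)$ on each local summand, and Theorem~\ref{t:locind}. The only cosmetic difference is that the paper obtains nilpotency by passing to generalized eigenspaces of $H^i(A)$ and reducing $f$ to a polynomial modulo a power of $\G m_\la$ (so that $H^i(f(A))(\la)$ is visibly upper triangular with $f(\la)$ on the diagonal), whereas you invoke the Nullstellensatz directly; both routes hinge on the same factorization of the $\hoc$-action through $\C O_\la$, which the paper imports from \cite[Proposition 4.4]{KaNe12}.
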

\begin{proof}
The first identity is an immediate consequence of the definition of the joint torsion and of Lemma \ref{lemma:det} (see also \cite[Theorem 3.4.1]{Kaa:JTS}).

To prove the second identity, it suffices to show that
\[
M\big(K\big( (h,g),\sH \big); f \big) = \prod_{\la \in Z(h) \cap Z(g)} f(\la)^{m_\la(h,g) \cd \T{Ind}(A - \la)}.
\]

Let $i \in \{-n,\ldots,0\}$. Since $Z(h) \cap Z(g) \cap \spe = \emp$, the Koszul cohomology group $H^i\big( (h, g),\sH \big)$ is finite dimensional over $\cc$. Let $H^i(A) := \big(H^i(A_1),\ldots,H^i(A_n)\big)$ denote the commuting tuple of linear operators on $H^i\big((h,g),\sH\big)$ induced by $A = (A_1,\ldots,A_n)$. For each $\la \in \cc^n$, let $H^i\big((h,g),\sH\big)(\la) \su H^i\big((h,g),\sH\big)$ denote the generalized eigenspace of the commuting tuple $H^i(A)$. Thus,
\[
\begin{split}
& H^i\big((h,g),\sH\big)(\la) \\
& \q := \big\{\xi \in H^i\big((h,g),\sH\big) \, | \, \exists m \in \nn : H^i(A_j - \la_j)^m(\xi) = 0 \, \, \forall j \in \{1,\ldots,n\} \big\}.
\end{split}
\]

Recall now that the finite dimensionality of $H^i((h,g),\sH)$ implies that
\[
\op_{\la \in Z(h,g) } H^i\big((h,g),\sH\big)(\la) \cong H^i\big((h,g),\sH\big),
\]
and furthermore that each component $H^i\big((h,g),\sH\big)(\la) \su H^i\big((h,g),\sH\big)$ admits a basis in which each of the restrictions
\[
H^i(A_j)(\la) : H^i\big((h,g),\sH\big)(\la) \to H^i\big((h,g),\sH\big)(\la)
\]
is upper triangular with only $\la_j$ on the diagonal. 
%

For each $\la \in Z(h) \cap Z(g)$, let $H^i(f(A))(\la) : H^i((h,g),\sH)(\la) \to H^i((h,g),\sH)(\la)$ denote the restriction of the isomorphism $H^i(f(A)) : H^i((h,g),\sH) \to H^i((h,g),\sH)$. It then follows immediately from the above that
\begin{equation}\label{eq:detzer}
\T{det}\big( H^i(f(A)) \big) = \prod_{\la \in Z(h) \cap Z(g)} \T{det}\big( H^i(f(A))(\la) \big).
\end{equation}

The next lemma gives a computation of the determinant $\T{det}\big( H^i(f(A))(\la)\big)$ for each $\la \in Z(h) \cap Z(g)$.

\begin{lem}\label{l:detpoi}
Let $\la \in Z(h) \cap Z(g)$. The isomorphism $H^i(f(A))(\la) : H^i\big((h,g),\sH\big)(\la) \to H^i\big((h,g),\sH\big)(\la)$ can be represented by an upper triangular matrix having $f(\la) \in \cc^*$ as its only diagonal entry. In particular,
\begin{equation}
\T{det}( H^i(f(A))(\la) ) = f(\la)^{\T{dim}_{\cc} H^i((h,g),\sH)(\la) }.
\end{equation}
\end{lem}
\begin{proof}
Suppose first that $f \in \hoc$ is the restriction of a polynomial $p$ in the variables $z_1,\ldots,z_n : \cc^n \to \cc$. In this case $H^i(f(A))(\la) :  H^i((h,g),\sH)(\la) \to H^i((h,g),\sH)(\la)$ is given by the polynomial $p(H^i(A)(\la))$ where each variable $z_j$ has been replaced by the linear operator $H^i(A_j)(\la) : H^i((h,g),\sH)(\la) \to H^i((h,g),\sH)(\la)$. Choose a basis for $H^i((h,g),\sH)(\la)$ in which each of the operators $H^i(A_j)(\la)$ is represented by an upper triangular matrix having $\la_j$ as the only diagonal entry. Represented in this basis $p(H^i(A)(\la))$ is an upper triangular matrix with $p(\la)$ as its only diagonal entry. This proves the claim of the lemma in this case.

To treat the general case, note that the action of $\hoc$ on $H^i((h,g),\sH)(\la)$ given by $k \mapsto H^i(k(A))(\la)$ factorizes through the local $\cc$-algebra $\C O_\la$ of convergent power series near $\la$. This is the content of \cite[Proposition 4.4]{KaNe12}. Since the endomorphisms $H^i(A_j - \la_j)(\la) : H^i((h,g),\sH)(\la) \to H^i((h,g),\sH)(\la)$ are nilpotent, $j \in \{1,\ldots,n\}$, this yields the existence of a polynomial $p \in \cc[z_1,\ldots,z_n]$ with $H^i(f(A))(\la) = H^i(p(A))(\la)$ and with $p(\la) = f(\la)$. To see this, remark that the nilpotency implies that the action of $\C O_\la$ on $H^i(A_j - \la_j)(\la)$ factorizes through the quotient ring $\C O_\la/ (\G{m}_\la)^m \C O_\la$ for some $m \in \nn$, where $\G{m}_\la \su \C O_\la$ denotes the unique maximal ideal in the local $\cc$-algebra $\C O_\la$. But each element in $\C O_\la/(\G{m}_\la)^m \C O_\la$ can be represented by a polynomial.

Since it now has been established that $H^i(f(A))(\la) = H^i(p(A))(\la)$ for some polynomial with $p(\la) = f(\la)$, the first part of the proof yields the general result of the lemma.
\end{proof}

Let $\la \in Z(h) \cap Z(g)$ be fixed. To continue the proof of Theorem \ref{t:nonsin}, remark that it follows by \cite[Proposition 4.5]{KaNe12}, that the vector spaces $H^i\big((h,g),\sH \big)(\la)$ and $H^i\big((h,g),\sH_\la \big)$ are isomorphic. Recall in this respect that $\sH_\la$ denotes the localization of the module $\sH$ over $\hoc$ at the prime ideal $\G{p}_\la := \{k \in \hoc \,| \, k(\la) = 0\}$. In particular, it follows from Theorem \ref{t:locind} that
\begin{equation}\label{eq:indide}
\begin{split}
& \T{dim}_{\cc} H^{+}\big( (h,g),\sH\big)(\la) - \T{dim}_{\cc} H^{-}\big( (h,g),\sH\big)(\la) \\
& \q = \T{Ind}_{\la}\big( (h,g)(A) \big) = m_\la(h,g) \cd \T{Ind}(A - \la).
\end{split}
\end{equation}

The desired multiplicative Lefschetz number $M\big(K((h,g),\sH); f\big) \in \cc^*$ can now be computed as follows,
\[
\begin{split}
& M\big(K((h,g),\sH); f\big)
= \T{det}\big(H^{+}(f)\big) \cd 
\T{det}\big(H^{-}(f)\big)^{-1} \\
& \q =  \prod_{\la \in Z(h) \cap Z(g)} \Big( \T{det}\big(H^{+}(f)(\la)\big)
\cd \T{det}\big(H^{-}(f)(\la)\big)^{-1} \Big) \\
& \q = \prod_{\la \in Z(h) \cap Z(g)}\Big( f(\la)^{\T{dim}_{\cc} H^{+}((h,g),\sH)(\la)}  \cd f(\la)^{-\T{dim}_{\cc} H^{-}((h,g),\sH)(\la)}\Big) \\
& \q = \prod_{\la \in Z(h) \cap Z(g)} f(\la)^{\T{Ind}_\la ( (h, g)(A) )}
= \prod_{\la \in Z(h) \cap Z(g)} f(\la)^{m_\la(h,g) \cd \T{Ind}(A-\la)},
\end{split}
\]
where the second identity follows from \eqref{eq:detzer}, the third identity follows by Lemma~\ref{l:detpoi}, and the final two identities follow from \eqref{eq:indide}. This proves the claim of Theorem~\ref{t:nonsin}.
\end{proof}

\section{Localization of the joint torsion}\label{s:Localization of the joint torsion}
Let $n \in \nn$. For each open set $U \su \cc^n$ we let $\C O(U)$ denote the unital $\cc$-algebra of holomorphic functions on $U$ with values in $\cc$.

For each compact set $K \su U$, we let $\sC(K)$ denote the unital $\cc$-algebra of continuous functions $f : K \to \cc$ such that the restriction to the interior $f|_{K^\ci} : K^\ci \to \cc$ is holomorphic. The unital $\cc$-algebra $\sC(K)$ becomes a Banach algebra when equipped with the supremum norm $\| \cd \|_\infty : f \mapsto \sup_{z \in K} |f(z)|$. We let
\[
r_K : \C O(U) \to \sC(K)
\]
denote the restriction homomorphism.

\begin{dfn}
Let $m \in \nn$ and let $V \su \cc^m$. We say that a map $\al : V \to \C O(U)$ is \emph{holomorphic} when the composition
\[
r_K \ci \al : V \to \sC(K)
\]
is holomorphic for each compact set $K \su U$.
\end{dfn}

Let us fix a commuting $n$-tuple $A = (A_1,\ldots,A_n)$ of bounded operators on the Hilbert space $\sH$.

\begin{prop}\label{p:holfuncal}
Let $U \ssu \T{Sp}(A)$ be an open set and let $\al : V \to \C O(U)$ be holomorphic. Then the map $V \to \sL(\sH)$, $w \mapsto \al(w)(A)$ is holomorphic in operator norm.
\end{prop}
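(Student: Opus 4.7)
The plan is to factor the map $w \mapsto \al(w)(A)$ as
\[
V \xrightarrow{\;\al\;} \C{O}(U) \xrightarrow{\;\rho_A\;} \sL(\sH),
\]
where $\rho_A(f) := f(A)$ denotes the Taylor holomorphic functional calculus. The first arrow is holomorphic by hypothesis and the second arrow is $\cc$-linear; what will be needed is that $\rho_A$ is continuous in the supremum norm on some compact neighbourhood of $\T{Sp}(A)$ inside $U$. Holomorphicity of the composition will then follow from the elementary fact that precomposing a bounded $\cc$-linear map between complex Banach spaces with a Banach-valued holomorphic map yields a holomorphic map.

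First, I would choose a compact set $K \su U$ with $\T{Sp}(A) \su K^\ci$. By the definition of holomorphicity for $\al$ given just before the proposition, the composition $r_K \ci \al : V \to \sC(K)$ is holomorphic into the complex Banach space $\big(\sC(K),\|\cdot\|_\infty\big)$. The key analytic input is then the classical estimate underpinning the Taylor functional calculus: there exists a constant $C_K > 0$ such that $\|f(A)\| \leq C_K \|f\|_\infty$ for every $f \in \C{O}(U)$. This comes from the Cauchy--Weil (or Bochner--Martinelli) integral representation of $f(A)$ as an integral over a cycle contained in $K \sem \T{Sp}(A)$ of $f$ against an operator-valued differential form depending only on $A$.

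Since $\rho_A$ is a unital $\cc$-algebra homomorphism, it is in particular $\cc$-linear, so the above estimate shows that the restriction of $\rho_A$ to the subspace $r_K(\C{O}(U)) \su \sC(K)$ extends uniquely to a bounded linear map from its closure into $\sL(\sH)$. Composing $r_K \ci \al$ with this bounded linear map yields the sought holomorphic map $V \to \sL(\sH)$, which coincides with $w \mapsto \al(w)(A)$. The main obstacle is producing the continuity estimate $\|f(A)\| \leq C_K \|f\|_\infty$; although standard, it requires choosing $K$ to be an admissible integration domain (e.g. a Weil polyhedron) and invoking Taylor's construction \cite{Tay:ACS}. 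The remainder of the argument is purely formal.
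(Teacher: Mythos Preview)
Your proposal is correct and follows essentially the same approach as the paper: choose a compact $K$ with $\T{Sp}(A) \su K^\ci \su K \su U$, use the definition of holomorphicity of $\al$ to get that $r_K \ci \al$ is holomorphic into the Banach space $\sC(K)$, and then compose with the bounded linear functional calculus map $\sC(K) \to \sL(\sH)$. The paper is slightly more terse, invoking \cite[Theorem 4.3]{Tay:ACS} directly to obtain a bounded operator $\sC(K) \to \sL(\sH)$, $f \mapsto f(A)$, rather than sketching the Cauchy--Weil integral estimate and extending from the image of $r_K$; but the substance is identical.
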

\begin{proof}
Choose a compact set $K \su U$ such that $\T{Sp}(A) \su K^\ci$. By \cite[Theorem 4.3]{Tay:ACS} the map $\sC(K) \to \sL(\sH)$, $f \mapsto f(A)$ is a bounded operator. Since the composition $r_K \ci \al : V \to \sC(K)$ is holomorphic by definition, this proves the proposition.
\end{proof}

Let us apply the notation
\[
\Om := \T{Sp}(A)^\ci
\]
for the interior of the spectrum of $A$.

\begin{thm}\label{t:locjoytor}
Let $U \ssu \T{Sp}(A)$ be an open set and let $h : U \to \cc^{n - 1}$ and $f,g : U \to \cc$ be holomorphic maps.

Suppose that $Z(h,f) \cap \T{Sp}_{\T{ess}}(A)$ and $Z(h,g) \cap \T{Sp}_{\T{ess}}(A)$ are empty.

Then the joint torsion of $\big( K(h,\sH); f,g\big)$ is given by
\begin{equation}\label{eq:locjoytor}
\T{JT}\big(  K(h,\sH); f,g\big) =
\lim_{w \to 0}\left( \frac{\prod_{\la \in Z(h_w,g_w) \cap \Om} f_w(\la)^{m_{\la}(h_w,g_w) \cd \T{Ind}(A - \la)}}
{\prod_{\mu \in Z(h_w,f_w) \cap \Om} g_w(\mu)^{m_{\mu}(h_w,f_w) \cd \T{Ind}(A - \mu)}} \right)
\end{equation}
for any holomorphic map $V \to \C O(U)^{n + 1}$, $w \mapsto (h_w,f_w,g_w)$ such that
\begin{enumerate}
\item $0 \in V$ and $(h_0,f_0,g_0) = (h,f,g)$.
\item $Z(h_w,f_w,g_w) \cap \T{Sp}(A) = \emptyset$ for all $w \in V\sem \{0\}$.
\item $Z(h_w,f_w) \cap \T{Sp}_{\T{ess}}(A)$ and $Z(h_w,g_w) \cap \T{Sp}_{\T{ess}}(A)$ are empty for all $w \in V$.
\end{enumerate}
\end{thm}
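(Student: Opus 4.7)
The plan is to observe that at $w=0$ the joint torsion in the left-hand side equals $\T{JT}\big(K(h,\sH);f,g\big)$, while at every $w \ne 0$ the triple $(h_w,f_w,g_w)$ satisfies the non-singularity hypothesis of Theorem \ref{t:nonsin} and hence its joint torsion is given by the explicit product formula on the right. The theorem will then fall out from the continuity statement Theorem \ref{t:conjoy} applied to the family $\{(K(h_w,\sH);f_w,g_w)\}_{w \in V}$ as $w \to 0$.

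First I would verify that $\T{JT}\big(K(h_w,\sH);f_w,g_w\big)$ is well-defined for every $w \in V$. Hypothesis (3) together with Lemma \ref{l:mapkos} shows that the mapping cones of the commuting cochain maps $f_w,g_w : K(h_w,\sH) \to K(h_w,\sH)$ are cochain isomorphic to the Koszul complexes $K\big((h_w,f_w),\sH\big)$ and $K\big((h_w,g_w),\sH\big)$, both of which are Fredholm. Hence $(K(h_w,\sH);f_w,g_w)$ defines a point of $\sF$ for every $w \in V$.

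Next I would apply Theorem \ref{t:nonsin} pointwise for $w \in V \sem \{0\}$. Hypothesis (2) gives $Z(h_w,f_w,g_w) = \emp$, so Theorem \ref{t:nonsin} yields
\[
\T{JT}\big(K(h_w,\sH); f_w,g_w\big) =
\frac{\prod_{\la \in Z(h_w,g_w)} f_w(\la)^{m_\la(h_w,g_w) \cd \T{Ind}(A - \la)}}
{\prod_{\mu \in Z(h_w,f_w)} g_w(\mu)^{m_\mu(h_w,f_w) \cd \T{Ind}(A - \mu)}}.
\]
Any common zero $\la \in Z(h_w,g_w) \sem \Om$ lies in $\pa\T{Sp}(A) \sem \T{Sp}_{\T{ess}}(A)$, so by homotopy invariance of the Fredholm index $\T{Ind}(A - \la) = 0$; using the convention of the remark following Theorem \ref{t:locind}, such a $\la$ contributes the trivial factor $1$. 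The same observation holds for $Z(h_w,f_w)$. Hence the two products above can be restricted to $Z(h_w,g_w) \cap \Om$ and $Z(h_w,f_w) \cap \Om$ respectively without altering their values.

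Finally I would pass to the limit $w \to 0$. By Proposition \ref{p:holfuncal} the maps $w \mapsto h_w(A), f_w(A), g_w(A) \in \sL(\sH)$ are holomorphic, hence continuous, in operator norm; consequently the Koszul differentials $d_{h_w(A)} = \sum_j h_{w,j}(A)\ot \ep_j^*$ and the cochain self-maps $f_w,g_w$ depend continuously on $w$, making $w \mapsto (K(h_w,\sH);f_w,g_w)$ a continuous map $V \to \sF$. Theorem \ref{t:conjoy} then ensures that $w \mapsto \T{JT}(K(h_w,\sH);f_w,g_w)$ is continuous on $V$, and in particular $\T{JT}(K(h,\sH);f,g) = \lim_{w \to 0}\T{JT}(K(h_w,\sH);f_w,g_w)$; combined with the explicit formula of the previous paragraph this proves \eqref{eq:locjoytor}. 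The bulk of the difficulty is already packaged into Theorem \ref{t:nonsin} and Theorem \ref{t:conjoy}; the only genuinely new (minor) point is the verification that boundary zeros of $(h_w,g_w)$ and $(h_w,f_w)$ contribute trivially, allowing the products to be restricted to $\Om$.
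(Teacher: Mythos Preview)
Your proof follows essentially the paper's approach: verify well-definedness for each $w$, apply Theorem~\ref{t:nonsin} for $w\neq 0$, and pass to the limit using continuity of the joint torsion together with Proposition~\ref{p:holfuncal}. The paper invokes the analyticity result Theorem~\ref{th:analyticity} rather than Theorem~\ref{t:conjoy}, but your choice is equally valid here; and your explicit justification for restricting the products from $Z(h_w,g_w)$ to $Z(h_w,g_w)\cap\Om$ via vanishing of $\T{Ind}(A-\la)$ on $\partial\,\T{Sp}(A)\sem\T{Sp}_{\T{ess}}(A)$ is a point the paper leaves implicit.

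One omission worth noting: the paper's proof begins by \emph{constructing} an explicit deformation, namely $w\mapsto(h,f-w,g)$ on a suitable ball $\B B_\de(0)$, and verifying that it satisfies conditions (1)--(3). This ensures the limit formula is not vacuous and is needed for the theorem as stated (and used in the sequel, e.g.\ Remark~\ref{r:expjoytor}). You should add this existence step.
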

%

\begin{proof}
We first prove the existence of a holomorphic map $V \to \C O(U)^{n + 1}$ with the properties $(1)$, $(2)$, and $(3)$.

Define the strictly positive numbers
\[
\begin{split}
& \de_0 := \inf\big\{ |f(z)| \mid z \in Z(h) \cap \T{Sp}_{\T{ess}}(A) \big\} \q \T{and} \\
& \de_1 := \inf\big\{ |f(z)| \mid z \in Z(h,f) \cap \T{Sp}(A) \T{ and } f(z) \neq 0 \big\}
\end{split}
\]
Consider the open ball
\[
\B B_\de(0) := \big\{ w \in \cc \mid |w| < \de \big\} 
\]
of radius $\de := \inf\{\de_0,\de_1\}$ and center $0 \in \cc$.

It can then be verified that the holomorphic map $\B B_\de(0) \to \C O(U)^{n+1}$, $w \mapsto (h,f-w,g)$ has the properties $(1)$, $(2)$, and $(3)$.

Let now $V \to \C O(U)^{n + 1}$ be any holomorphic map which satisfies $(1)$, $(2)$, and $(3)$. By Theorem \ref{th:analyticity} and Proposition \ref{p:holfuncal} we have that
\[
\T{JT}\big( K(h,\sH); f,g\big) = \lim_{w \to 0} \T{JT}\big( K(h_w,\sH); f_w, g_w \big)
\]
However, by Theorem \ref{t:nonsin} we may compute the joint torsion on the right hand side,
\[
\T{JT}\big( K(h_w,\sH); f_w, g_w \big) = \frac{\prod_{\la \in Z(h_w,g_w) \cap \Om} f_w(\la)^{m_{\la}(h_w,g_w) \cd \T{Ind}(A - \la)}}
{\prod_{\mu \in Z(h_w,f_w) \cap \Om} g_w(\mu)^{m_{\mu}(h_w,f_w) \cd \T{Ind}(A - \mu)}}
\]
for all $w \in V\sem \{0\}$. This proves the theorem.
\end{proof}

\begin{remark}\label{r:expjoytor}
It follows from the proof of Theorem \ref{t:locjoytor} that the joint torsion $\T{JT}(K(h,\sH);f,g)$ can be computed more explicitly as the limit
\[
\T{JT}(K(h,\sH);f,g) = \lim_{w \to 0} \left( \frac{\prod_{\la \in Z(h,g) \cap \Om} (f(\la) - w)^{m_\la(h,g) \cd \T{Ind}(A - \la)}}{\prod_{\mu \in \Om \cap Z(h) \cap f^{-1}(\{w\})} g(\mu)^{m_\mu(h,f - w) \cd \T{Ind}(A - \mu)}  } \right)
\]
where $w \in \cc$ approaches zero in the Euclidean metric.
\end{remark}

For each $\la \in \cc^n$ and each $\ep > 0$, we apply the notation
\[
\B D^n_\ep(\la) := \big\{ z \in \cc^n \mid |z_j - \la_j| < \ep \T{ for all } j \in \{1,\ldots,n\} \big\} 
\]
for the open polydisc with radius $\ep >0$ and center $\la$.

\begin{thm}\label{t:joystalk}
Let $U \su \cc^n$ be open and let $h : U \to \cc^{n-1}$ and $f,g : U \to \cc$ be holomorphic.

Suppose that $\la \in U$ is an isolated point in $Z(h,f) \cup Z(h,g)$.

Then the limit
\[
c_\la(h;f,g) = \lim_{w \to 0} \left( 
\frac{\prod_{\mu \in Z(h_w,g_w) \cap \B D^n_{\ep/2}(\la)} f_w(\mu)^{m_\mu(h_w,g_w)}}
{\prod_{\nu \in Z(h_w,f_w) \cap \B D^n_{\ep/2}(\la)} g_w(\nu)^{m_\nu(h_w,f_w)}}
\right)
\]
exists, for any $\ep > 0$ and any holomorphic map $V \to \C O(\B D^n_\ep(\la))^{n + 1}$, $w \mapsto (h_w,f_w,g_w)$ such that
\begin{enumerate}
\item $\B D^n_\ep(\la) \su U$ and $\B D^n_\ep(\la) \cap \big( Z(h,f) \cup Z(h,g) \big) = \{\la\}$.
\item $0 \in V$ and $(h_0,f_0,g_0) = (h,f,g)$.
\item $Z(h_w,f_w,g_w) \cap \ov{\B D^n_{\ep/2}(\la)} = \emptyset$ for $w\in V\sem\{0\}$.
\item $Z(h_w,f_w) \cap \pa \B D^n_{\ep/2}(\la)$ and $Z(h_w,g_w) \cap \pa \B D^n_{\ep/2}(\la)$ are empty for all $w \in V$.
\end{enumerate}
Furthermore, $c_\la(h;f,g) \in \cc^*$ only depends on the image of $(h,f,g) \in \C O(U)^{n+1}$ in the stalk $\C O_\la^{n+1}$ at $\la \in U$.
\end{thm}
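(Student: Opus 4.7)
The plan is to realize the limit on the right as the joint torsion on the Hardy space of a polydisc and then invoke Theorem \ref{t:locjoytor}. Fix $\ep > 0$ and a holomorphic family $w \mapsto (h_w,f_w,g_w)$ as in the statement. Put $\sH := H^2\bigl(\B D^n_{\ep/2}(\la)\bigr)$ and let $A := (T_{z_1},\ldots,T_{z_n})$ be the commuting $n$-tuple of multiplication operators by the coordinate functions; the obvious affine change of variables applied to Example \ref{spectrum:polydisc} gives
\[
\T{Sp}(A) = \ov{\B D^n_{\ep/2}(\la)}, \q \T{Sp}_{\T{ess}}(A) = \pa \B D^n_{\ep/2}(\la), \q \T{Ind}(A - \mu) = 1 \T{ for all } \mu \in \Om := \B D^n_{\ep/2}(\la).
\]
Since $\B D^n_\ep(\la) \supset \T{Sp}(A)$ is an open neighborhood, $h,f,g$ and $h_w,f_w,g_w$ restrict to elements of $\hoc$ and $\hoc^{n+1}$.

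Next I would verify the hypotheses of Theorem \ref{t:locjoytor}. Condition $(1)$ on $\ep$ forces $Z(h,f) \cap \ov{\Om} \su \{\la\} \su \Om$, and similarly for $(h,g)$, so both sets avoid $\T{Sp}_{\T{ess}}(A)$. Condition $(3)$ gives $Z(h_w,f_w,g_w) \cap \T{Sp}(A) = \emptyset$ for $w \neq 0$, and condition $(4)$ ensures that for every $w \in V$ both $Z(h_w,f_w) \cap \T{Sp}(A)$ and $Z(h_w,g_w) \cap \T{Sp}(A)$ lie in $\Om$ and therefore avoid $\T{Sp}_{\T{ess}}(A)$. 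Theorem \ref{t:locjoytor}, together with $\T{Ind}(A - \mu) = 1$ on $\Om$, then identifies the limit in the statement with $\T{JT}\bigl(K(h,\sH); f, g\bigr)$. This establishes both existence of the limit and its independence of the choice of family at fixed $\ep$.

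It then remains to show that the value depends neither on $\ep$ nor on more than the germ of $(h,f,g)$ at $\la$. For independence of $\ep$, I would compare two admissible radii $0 < \ep' < \ep$ using the explicit family $w \mapsto (h, f-w, g)$ constructed in the proof of Theorem \ref{t:locjoytor}, which is valid simultaneously for both radii once $V$ is chosen as a sufficiently small disc around $0 \in \cc$; by upper semi-continuity of zero sets, for $w$ near $0$ every common zero of $(h,f-w)$ in $\B D^n_{\ep/2}(\la)$ actually lies in $\B D^n_{\ep'/2}(\la)$, and analogously for $(h,g)$, so the two limit formulas coincide. For stalk-independence, any two triples $(h,f,g), (h',f',g') \in \C O(U)^{n+1}$ with the same image in $\C O_\la^{n+1}$ agree on some polydisc $\B D^n_\ep(\la) \su U$, and using identical families on this polydisc yields the same limit. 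The only real technicality is arranging the parameter domain $V$ so that conditions $(1)$--$(4)$ are preserved simultaneously under the comparison; with this bookkeeping in place, the result follows directly from Theorem \ref{t:locjoytor}.
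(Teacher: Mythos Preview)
Your proposal is correct and follows essentially the same route as the paper: realize the situation via Toeplitz operators on the Hardy space of a polydisc centered at $\la$ (the paper writes this as $A=(\ep/2\,T_{z_1},\ldots,\ep/2\,T_{z_n})+\la$ on $H^2(\B D^n)$, which is your affine change of variables), apply Theorem \ref{t:locjoytor} using that $\T{Ind}(A-\mu)=1$ on the open polydisc, and then dispose of the $\ep$-dependence by choosing a single family whose zero sets in the larger polydisc already lie in the smaller one. Your verification of the hypotheses and the $\ep$-comparison via the family $w\mapsto(h,f-w,g)$ match the paper's argument essentially line for line.
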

\begin{proof}
Consider the commuting tuple $A := \big( \ep/2 T_{z_1},\ldots, \ep/2 T_{z_n} \big) + \la$ of Toeplitz operators acting on the Hardy space over the polydisc, $H^2(\B D^n)$. It then follows that $\T{Sp}(A) = \ov{\B D^n_{\ep/2}(\la)}$ and that $\T{Sp}_{\T{ess}}(A) = \pa \B D^n_{\ep/2}(\la)$. Furthermore, we have that $\T{Ind}(A - \mu) = 1$ for all $\mu \in \B D^n_{\ep/2}(\la)$, see Example \ref{spectrum:polydisc}.

An application of Theorem \ref{t:locjoytor} then yields that the limit $c_\la(h;f,g)$ exists and coincides with the joint torsion $\T{JT}\big( K(h(A),H^2(\B D^n));f(A),g(A) \big)$.

To see that $c_\la(h;f,g)$ only depends on the value of $(h,f,g) \in \C O(U)$ in the stalk $\C O_\la$ it suffices to check that it is independent of the holomorphic map $V \to \C O(\B D^n_\ep(\la))^{n+1}$ and of $\ep > 0$.

It follows immediately by Theorem \ref{t:locjoytor} that $c_\la(h;f,g)$ is independent of the choice of holomorphic map $V \to \C O(\B D^n_\ep(\la))^{n+1}$. 

Let us thus choose an alternative $\ep_0 > 0$ with $\ep_0 < \ep$. We may then find a holomorphic map $V_0 \to \C O(\B D^n_\ep(\la))$, $w \mapsto (h_w,f_w,g_w)$ such that
\[
\big( Z(h_w,f_w) \cup Z(h_w,g_w) \big) \cap \B D^n_{\ep_0/2}(\la) 
= \big( Z(h_w,f_w) \cup Z(h_w,g_w) \big) \cap \ov{ \B D^n_{\ep/2}(\la) }
\]
and such that $(2)$ is satisfied as well. It is then clear that
\[
\begin{split}
c_\la^\ep(h;f,g) & = \lim_{w \to 0} \left( 
\frac{\prod_{\mu \in Z(h_w,g_w) \cap \B D^n_{\ep/2}(\la)} f_w(\mu)^{m_\mu(h_w,g_w)}}
{\prod_{\nu \in Z(h_w,f_w) \cap \B D^n_{\ep/2}(\la)} g_w(\mu)^{m_\nu(h_w,f_w)}}
\right) \\
& = \lim_{w \to 0} \left( 
\frac{\prod_{\mu \in Z(h_w,g_w) \cap \B D^n_{\ep_0/2}(\la)} f_w(\mu)^{m_\mu(h_w,g_w)}}
{\prod_{\nu \in Z(h_w,f_w) \cap \B D^n_{\ep_0/2}(\la)} g_w(\mu)^{m_\nu(h_w,f_w)}}
\right)
= c_\la^{\ep_0}(h;f,g)
\end{split}
\]
This proves the theorem.
\end{proof}

\begin{remark}\label{r:explocexp}
The value $c_\la(h;f,g) \in \cc^*$ may be computed more explicitly by the formula
\[
\lim_{w \to 0} \left( \frac{ (f(\la) - w)^{m_\la(h,g)}}
{\prod_{\nu \in Z(h) \cap f^{-1}(\{w\}) \cap \B D^n_{\ep/2}(\la)} g(\nu)^{m_\nu(h,f - w)}  } \right)
\]
where $w \in \cc$ approaches zero in Euclidean metric on $\cc$ and where $\ep > 0$ is chosen such that
\[
\B D^n_\ep(\la) \su U \q \T{and} \q \B D^n_\ep(\la) \cap \big( Z(h,g) \cup Z(h,f) \big) = \{\la\}
\]
This is a consequence of the proof of Theorem \ref{t:joystalk} and Remark \ref{r:expjoytor}.
\end{remark}

\begin{remark}
The quantity $c_\la(h;f,g) \in \cc^*$ can be expressed as a limit of a sequence instead of as a limit point of a holomorphic function, see Theorem \ref{t:joystalkint}. To see this it suffices to apply Theorem \ref{t:conjoy} instead of Theorem \ref{th:analyticity} in the proof of Theorem \ref{t:locjoytor} and Theorem \ref{t:joystalk}.
\end{remark}

The next theorem is the main result of this paper. It provides a local formula for the joint torsion.

\begin{thm}\label{t:joytorgen}
Let $A = (A_1,\ldots,A_n)$ be a commuting $n$-tuple on the Hilbert space $\sH$ and let $h : \T{Sp}(A) \to \cc^{n-1}$ and $f,g : \T{Sp}(A) \to \cc$ be holomorphic. Suppose that 
\[
Z(h,f) \cap \T{Sp}_{\T{ess}}(A) = \emptyset = Z(h,g) \cap \T{Sp}_{\T{ess}}(A)
\]
Then
\[
\T{JT}\big( K(h,\sH); f,g \big) = \prod_{\la \in \big( Z(h,f) \cup Z(h,g)\big) \cap \Om} c_\la(h;f,g)^{\T{Ind}(A - \la)}
\]
where $\Om := \T{Sp}(A)^\ci$.
\end{thm}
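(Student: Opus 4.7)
The plan is to derive Theorem \ref{t:joytorgen} from Theorem \ref{t:locjoytor} together with Remark \ref{r:expjoytor} by partitioning the global limit formula for the joint torsion according to the points $\la \in \bigl(Z(h,f) \cup Z(h,g)\bigr) \cap \Om$ and matching each local cluster against the local invariant characterized in Remark \ref{r:explocexp}.

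First I would apply Theorem \ref{t:locjoytor} with the explicit perturbation $(h_w,f_w,g_w) := (h, f-w, g)$ for $w$ in a small disc $\B B_\de(0) \su \cc$; as in the proof of Theorem \ref{t:locjoytor}, a sufficiently small $\de > 0$ makes conditions $(1)$--$(3)$ valid. By Remark \ref{r:expjoytor} this rewrites the joint torsion as the limit, as $w \to 0$, of
\[
\frac{\prod_{\la \in Z(h,g) \cap \Om} (f(\la) - w)^{m_\la(h,g) \cd \T{Ind}(A - \la)}}{\prod_{\mu \in \Om \cap Z(h) \cap f^{-1}(\{w\})} g(\mu)^{m_\mu(h, f - w) \cd \T{Ind}(A - \mu)}}.
\]
Since $\bigl(Z(h,f) \cup Z(h,g)\bigr) \cap \Om$ is finite and disjoint from the closed set $\T{Sp}_{\T{ess}}(A)$, I would then choose pairwise disjoint open polydiscs $\B D^n_{\ep_\la}(\la) \su \Om \sem \T{Sp}_{\T{ess}}(A)$ isolating each $\la$ from the remaining points of $Z(h,f) \cup Z(h,g)$. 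Local constancy of the Fredholm index on the complement of $\T{Sp}_{\T{ess}}(A)$ then yields $\T{Ind}(A - \mu) = \T{Ind}(A - \la)$ for every $\mu \in \B D^n_{\ep_\la}(\la)$.

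The combinatorial step I would carry out next is to show that, for $|w|$ small enough, every $\mu \in \Om \cap Z(h) \cap f^{-1}(\{w\})$ lies in a unique polydisc $\B D^n_{\ep_\la/2}(\la)$ with $\la \in Z(h,f) \cap \Om$. This follows because $|f|$ is bounded below on the compact set $Z(h) \cap \T{Sp}(A)$ away from small neighbourhoods of its zeros, while zeros of $(h,f)$ lying on $\pa \T{Sp}(A) \sem \T{Sp}_{\T{ess}}(A)$ carry Fredholm index zero and so contribute trivially. Grouping both products accordingly and pulling the now-constant exponent $\T{Ind}(A - \la)$ out of each cluster rewrites the limit as
\[
\lim_{w \to 0} \prod_{\la \in (Z(h,f) \cup Z(h,g)) \cap \Om} \left( \frac{(f(\la) - w)^{m_\la(h,g)}}{\prod_{\mu \in Z(h) \cap f^{-1}(\{w\}) \cap \B D^n_{\ep_\la/2}(\la)} g(\mu)^{m_\mu(h, f - w)}} \right)^{\T{Ind}(A - \la)},
\]
with the convention that the numerator exponent vanishes when $\la \notin Z(h,g)$ and that the inner product is empty when $\la \notin Z(h,f)$. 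By Remark \ref{r:explocexp} each factor of the (finite) outer product converges to $c_\la(h;f,g)^{\T{Ind}(A - \la)}$, which yields the claimed identity.

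The step I expect to be the main obstacle is this clustering argument: one must carefully justify that all zeros of $(h, f - w)$ relevant to the denominator remain inside the chosen polydiscs as $w \to 0$, with no stray contributions elsewhere in $\Om$. This requires a compactness and continuity argument producing a uniform lower bound for $|f - w|$ on $Z(h) \cap \T{Sp}(A)$ outside the union of the polydiscs, together with the observation that zeros drifting toward $\pa \T{Sp}(A) \sem \T{Sp}_{\T{ess}}(A)$ can be absorbed because the Fredholm index vanishes there. Once clustering is controlled, matching each local group against Remark \ref{r:explocexp} is a direct identification of exponents and factors.
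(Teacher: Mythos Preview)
Your proposal is correct and follows essentially the same route as the paper's proof: both apply Theorem \ref{t:locjoytor} with the explicit perturbation $w \mapsto (h,f-w,g)$, isolate the points of $Z(h,f) \cup Z(h,g)$ in disjoint polydiscs avoiding $\T{Sp}_{\T{ess}}(A)$, use local constancy of $\T{Ind}(A - \cdot)$ on these polydiscs (together with its vanishing at boundary points of $\T{Sp}(A)$) to pull out the exponent, and then identify each local factor with $c_\la(h;f,g)$ via the explicit limit of Remark \ref{r:explocexp}. The clustering step you flag as the main obstacle is handled in the paper exactly as you suggest, by choosing $\de > 0$ below the infimum of $|f|$ on $Z(h) \cap \T{Sp}(A)$ outside the chosen half-polydiscs.
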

\begin{proof}
Let $\wit h : U \to \cc^{n-1}$ and $\wit f, \wit g : U \to \cc$ be holomorphic representatives for $h \in \C O(\T{Sp}(A))^{n - 1}$ and $f,g \in \C O(\T{Sp}(A))$ on an open set $U \ssu \T{Sp}(A)$.

Choose an $\ep > 0$ such that
\[
\B D_\ep^n(\la) \su U \, \, , \, \, \, \B D_\ep^n(\la) \cap \big( Z(h,f) \cup Z(h,g) \big) = \{ \la \}
\, \, , \, \, \,
\B D_\ep^n(\la) \cap \T{Sp}_{\T{ess}}(A) = \emptyset
\]
for all $\la \in Z(h,f) \cup Z(h,g)$. We may furthermore arrange that
\[
\B D_\ep^n(\la) \su \Om
\]
for all $\la \in \big( Z(h,f) \cup Z(h,g) \big) \cap \Om$. Finally, we may assume that
\[
\B D_\ep^n(\la) \cap \B D_\ep^n(\mu) = \emptyset
\]
whenever $\la \neq \mu$ and $\la,\mu \in Z(h,f) \cup Z(h,g)$.

Choose a $\de > 0$ such that
\[
\begin{split}
& \de \leq \inf\big\{ |f(z)| \mid z \in Z(h,g) \T{ and } f(z) \neq 0 \big\} \q \T{and} \\
& \de \leq \inf\big\{ |f(z)| \mid z \in \bigcap_{\la \in Z(h,g) \cup Z(h,f)} \big( \cc^n \sem \B D_{\ep/2}^n(\la) \big) \cap Z(h) \big\}
\end{split}
\]

The map $\B B_\de(0) \to \C O(U)^{n + 1}$, $w \mapsto (\wit h, \wit f - w, \wit g)$ is then holomorphic and it satisfies conditions $(1)$, $(2)$, and $(3)$ of Theorem \ref{t:locjoytor}. Furthermore, we have that
\[
\big( Z(h,f - w) \cup Z(h,g) \big) \cap \big( \cup_{\la \in Z(h,g) \cup Z(h,f)} \B D_{\ep/2}^n(\la) \big)
= Z(h,f - w) \cup Z(h,g)
\]
for all $w \in \B B_\de(0)$.

An application of Theorem \ref{t:locjoytor} now yields that
\[
\T{JT}\big( K(h,\sH) ; f,g \big) = \lim_{w \to 0}\left( 
\prod_{\la \in Z(h,g) \cup Z(h,f)} \frac{f_w(\la)^{m_\la(h,g) \cd \T{Ind}(A-\la)} }
{\prod_{\nu \in \B D_{\ep/2}^n(\la) \cap Z(h,f_w) \cap \Om} g(\nu)^{m_\nu(h,f_w) \cd \T{Ind}(A - \nu)}}
\right)
\]
where $f_w := f - w$ for all $w \in \B B_\de(0)$.

Since the Fredholm index is a homotopy invariant and since $\B D_{\ep/2}^n(\la) \cap \T{Sp}_{\T{ess}}(A) = \emptyset$ for all $\la \in Z(h,g) \cup Z(h,f)$ we obtain that
\[
\T{JT}\big( K(h,\sH) ; f,g \big) = \lim_{w \to 0} \prod_{\la \in \big(Z(h,g) \cup Z(h,f) \big) \cap \Om} 
\frac{f_w(\la)^{m_\la(h,g) \cd \T{Ind}(A - \la)}  }
{\prod_{\nu \in \B D_{\ep/2}^n(\la) \cap Z(h,f_w)} g(\nu)^{m_\nu(h,f_w) \cd \T{Ind}(A - \la)}  }
\]

However, from Theorem \ref{t:joystalk} we know that the limit
\[
\lim_{w \to 0} \frac{f_w(\la)^{m_\la(h,g)}}
{\prod_{\nu \in \B D_{\ep/2}^n(\la) \cap Z(h,f_w)} g(\nu)^{m_\nu(h,f_w)}}
\]
exists and agrees with $c_\la(h;f,g)$ for each $\la \in \big( Z(h,g) \cup Z(h,f) \big) \cap \Om$. This proves the present theorem.
\end{proof}

\section{Application: Tame symbols of complex analytic curves}\label{s:tamsymcom}

\subsection{Preliminaries on complex analytic spaces}
Consider an open set $U \su \cc^n$ together with holomorphic functions $h_1,\ldots,h_m : U \to \cc$.

Define the zero-set
\[
Z(h) := \{ z \in U \mid h_1(z) =\ldots h_m(z) = 0 \}.
\]

Let $\C O_U$ denote the sheaf of holomorphic functions on $U$.

For each $z \in Z(h)$, let
\[
\inn{h}_z \su \C O_z
\]
denote the ideal generated by $h_1,\ldots,h_m : U \to \cc$ in the stalk $\C O_z$ of $\C O_U$ at the point $z \in Z(h)$.

The \emph{complex model space} associated to $h_1,\ldots,h_m : U \to \cc$ is the pair
\[
\big( Z(h), \C O/ \inn{h} \big|_{Z(h)} \big)
\]
consisting of the Hausdorff space $Z(h)$ and the restriction of the quotient sheaf $\C O/\inn{h}$ to the zero-set $Z(h)$. Thus, for an open set $V \su \cc^n$, a local section $s \in (\C O/\inn{h})(V \cap Z(h))$ is a collection
\[
\{ s_z \}_{z \in V \cap Z(h)} \q s_z \in \C O_z/\inn{h}_z
\]
such that for each $z_0 \in V \cap Z(h)$ there exists an open set $W \su \cc^n$ with $z_0 \in W$ and a holomorphic map $t : W \to \cc$ with
\[
s_z = [t_z] \q \forall z \in W \cap Z(h)
\]
where $[t_z]$ denotes image of $t \in \C O(W)$ under the map
\[
\C O(W) \to \C O_z \to \C O_z / \inn{h}_z \, .
\]

We recall the following definition from \cite{GrRe:CAS}.

\begin{dfn}
A \emph{complex analytic space} is a pair $(X, \C O_X)$ where $X$ is a Hausdorff space and $\C O_X$ is a sheaf of local $\cc$-algebras on $X$, such that for each $x \in X$ there exist an open neighborhood $V \su X$ and a complex model space $\big( Z(h), \C O/\inn{h}\big|_{Z(h)} \big)$ together with an isomorphism 
\[
(\phi,\ov \phi) : \big(Z(h),\C O/\inn{h}\big|_{Z(h)}\big) \to (V,\C O_X|_V) 
\]
of sheafs of local $\cc$-algebras. Thus, $\phi : Z(h) \to V$ is an isomorphism of topological spaces and $\ov \phi(W) : \C O_X(W) \to \big( \C O/\inn{h} \big)(\phi^{-1}(W))$ is an isomorphism of $\cc$-algebras for each open set $W \su V$.

We will refer to $\big( Z(h), \C O/\inn{h}\big|_{Z(h)} \big)$ as a \emph{local model} for $(X, \C O_X)$ near the point $x \in X$.
\end{dfn}

\subsection{The Tate tame symbol}
Let $(X, \C O_X)$ be a complex analytic space and let us fix a point $x \in X$.
\medskip

\noindent \emph{Suppose that $\T{dim}_x(X) = 1$.}
\medskip

Thus, there exists an open neighborhood $V \su X$ of $x$ and a section $f \in \C O_X(V)$ such  that $f(x) = 0$ and such that the quotient $\C O_{X,x}/\inn{f_x}$ is a finite dimensional vector space over $\cc$, where $\inn{f_x}$ is the ideal generated by $f$ in the stalk $\C O_{X,x}$.

The following assumption will also remain valid throughout this subsection.

\begin{assu}\label{a:locmod}
Suppose that there exists a local model $\big( Z(h), \C O/\inn{h}\big|_{Z(h)} \big)$ for $(X, \C O_X)$ near $x \in X$ where
\[
h = (h_1,\ldots,h_{n - 1}) : U \to \cc^{n - 1}
\]
is holomorphic and $U \su \cc^n$ is open.
\end{assu}

We are now ready to define the Tate tame symbol at the point $x \in X$.

\begin{dfn}\label{d:tatsym}
Let $V \su X$ be an open neighborhood of $x \in X$ and let $f,g \in \C O_X(V)$ with
\[
\C O_{X,x}/\inn{f_x} \q \M{and} \q \C O_{X,x}/\inn{g_x}
\]
of finite dimension over $\cc$. Let $\big( Z(h), \C O/\inn{h}\big|_{Z(h)} \big)$ be a local model for $(X, \C O_X)$ near $x \in X$ as in Assumption \ref{a:locmod} with associated isomorphism
\[
(\phi,\ov \phi) : \big(Z(h),\C O/\inn{h}\big|_{Z(h)}\big) \to (V,\C O_X|_V) 
\]

The \emph{Tate tame symbol} of $f,g \in \C O_X(V)$ at $x \in X$ is defined by
\[
c_x( X;f,g) := c_{\phi^{-1}(x)}\big( h|_{\Om} ; \wit f, \wit g\big)
\]
where $\Om \su U$ is an open neighborhood of $\phi^{-1}(x) \in Z(h)$ and $\wit f, \wit g : \Om \to \cc$ are holomorphic functions such that the identities
\[
[\wit f_\mu] = \ov \phi (V)(f)_\mu \q \M{and} \q [\wit g_\mu] = \ov \phi (V)(g)_\mu
\]
hold in $\C O_\mu/\inn{h}_\mu$ for all $\mu \in \Om \cap Z(h)$.
\end{dfn}

We need to show that that the Tate tame symbol is independent of the various choices. This is part of the next proposition, which also gives a more concrete expression for the tame symbol.

Let us introduce the notation
\[
m_x(f) := \M{dim}_\cc \C O_{X,x}/\inn{f_x}
\]
for any local section $f \in \C O_X(V)$. We remark that $m_x(f) = 0$ whenever $f(x) \neq 0$.

\begin{prop}
Let $V \su X$ be an open neighborhood of $x \in X$ and let $f,g \in \C O_X(V)$ with
\[
\C O_{X,x}/\inn{f_x} \q \M{and} \q \C O_{X,x}/\inn{g_x}
\]
finite dimensional. Then
\[
c_x(X;f,g) = \lim_{w \to 0} \frac{\big( f(x) - w\big)^{m_x(g)}}
{\prod_{y \in \Te \cap f^{-1}(\{w\})} g(y)^{m_y(f - w)}}
\]
where $w \in \cc$ approaches $0$ in the Euclidean metric on $\cc$, and where $\Te \su V$ is any open neighborhood of $x$ such that
\begin{enumerate}
\item $\ov{\Te} \su V$ and $\ov{\Te}$ is compact.
\item $\ov{\Te} \cap \big( Z(f) \cup Z(g) \big) \su \{ x \}$.
\end{enumerate}
In particular, we have that $c_x(X;f,g)$ is well-defined.
\end{prop}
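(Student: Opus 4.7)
The plan is to reduce the statement to the explicit formula of Remark \ref{r:explocexp} applied to a holomorphic representative of the problem, and then translate the multiplicities and evaluations through the local model isomorphism $(\phi,\ov\phi)$.

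First I would fix a local model as in Assumption \ref{a:locmod} together with holomorphic lifts $\wit f,\wit g : \Om \to \cc$ of $\ov\phi(V)(f), \ov\phi(V)(g)$ defined on an open neighborhood $\Om \su U$ of $\phi^{-1}(x)$. By Definition \ref{d:tatsym} we have $c_x(X;f,g) = c_{\phi^{-1}(x)}(h|_\Om;\wit f,\wit g)$, and Remark \ref{r:explocexp} gives
\[
c_{\phi^{-1}(x)}(h;\wit f,\wit g) = \lim_{w \to 0} \frac{(\wit f(\phi^{-1}(x))-w)^{m_{\phi^{-1}(x)}(h,\wit g)}}{\prod_{\nu \in Z(h) \cap \wit f^{-1}(\{w\}) \cap \B D^n_{\ep/2}(\phi^{-1}(x))} \wit g(\nu)^{m_\nu(h,\wit f-w)}}
\]
for any sufficiently small $\ep > 0$.

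The next step is to translate this into the intrinsic data on $X$. The stalk isomorphism $\ov\phi_{\phi^{-1}(x)} : \C O_{X,x} \to \C O_{\phi^{-1}(x)}/\inn{h}_{\phi^{-1}(x)}$ sends $f_x \mapsto [\wit f_{\phi^{-1}(x)}]$ and similarly for $g$. By the third isomorphism theorem for quotient rings this yields a $\cc$-algebra isomorphism
\[
\C O_{X,x}/\inn{f_x} \cong \C O_{\phi^{-1}(x)}/\inn{h,\wit f}_{\phi^{-1}(x)},
\]
so $m_x(f) = m_{\phi^{-1}(x)}(h,\wit f)$, and analogously for $g$ and for each perturbation $f - w$ at points of $Z(h)$ in the fiber. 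Since $\phi$ realises a homeomorphism $Z(h) \cap \Om \cong \phi(Z(h) \cap \Om) \su V$ and the stalk of a holomorphic function pulls back to its image in the quotient, the evaluations agree: $\wit f(\phi^{-1}(y)) = f(y)$ and $\wit g(\phi^{-1}(y)) = g(y)$ for $y$ in the image, and the finite set $Z(h) \cap \wit f^{-1}(\{w\}) \cap \B D^n_{\ep/2}(\phi^{-1}(x))$ corresponds bijectively to $\phi(\B D^n_{\ep/2}(\phi^{-1}(x)) \cap Z(h)) \cap f^{-1}(\{w\})$.

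It then remains to match the neighborhood $\Te$ with the polydisc $\B D^n_{\ep/2}(\phi^{-1}(x))$. Given $\Te$ satisfying (1) and (2), shrink $\ep$ so that the compact set $\ov{\phi(\B D^n_{\ep/2}(\phi^{-1}(x)) \cap Z(h))}$ is contained in $\Te$; conversely, since $\phi^{-1}(\ov\Te)$ is a compact neighborhood of $\phi^{-1}(x)$ in $Z(h)$, any sufficiently small polydisc meets $Z(h)$ inside $\phi^{-1}(\Te)$. For $w$ close enough to $0$ the zero set $Z(h) \cap \wit f^{-1}(\{w\})$ remains confined to such a polydisc (by properness of $\wit f$ on $Z(h)$ near the isolated zero $\phi^{-1}(x)$, equivalently by hypothesis (2) on $\Te$ and continuity of $f$), so the two products index the same finite set. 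Substituting the translated multiplicities and evaluations into the formula from Remark \ref{r:explocexp} yields the claimed expression, and in particular shows that the limit is independent of $\Te$.

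Well-definedness of $c_x(X;f,g)$ follows because the right-hand side of the established formula makes no reference to the chosen local model or to the lifts $\wit f,\wit g$; alternatively, one invokes the final clause of Theorem \ref{t:joystalk} asserting that $c_\la(h;f,g)$ depends only on the image of $(h,f,g)$ in the stalk $\C O_\la^{n+1}$, so that any two lifts agreeing modulo $\inn{h}$ produce equal values, and any two local models differ by a stalk isomorphism under which the triples correspond. The main technical point I expect to have to verify carefully is the matching of the finite sets of perturbed zeros under $\phi$ for small $w$, which rests on the fact that $\phi^{-1}(x)$ is isolated in $Z(h,f) \cup Z(h,g)$ together with continuity of the roots of $\wit f - w$ on $Z(h)$.
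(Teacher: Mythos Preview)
Your proposal is correct and follows essentially the same route as the paper: reduce to Remark \ref{r:explocexp} via the local model, transport evaluations and multiplicities through the stalk isomorphism $\ov\phi$, and then match a general $\Te$ with a small polydisc neighborhood by shrinking $\ep$ and using that $|f|$ is bounded away from zero on $\ov\Te$ outside the polydisc image (your appeal to hypothesis (2) and continuity of $f$). The paper makes this last step explicit by setting $\de := \inf\{|f(y)| : y \in \ov\Te \sem \phi(Z(h)\cap \B D^n_{\ep/2}(\la))\}$, which is exactly the quantitative form of your confinement argument.
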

\begin{proof}
Let us choose a local model $\big( Z(h), \C O/\inn{h} \big|_{Z(h)} \big)$ for $(X,\C O_X)$ near the point $x \in X$, and let
\[
(\phi, \ov \phi) : \big( Z(h), \C O/\inn{h} \big|_{Z(h)} \big) \to (V,\C O_X|_V)
\]
denote the associated isomorphism.

Put $\la := \phi^{-1}(x)$ and choose lifts
\[
\wit f, \wit g : \Om \to \cc
\]
of the sections $\ov \phi (V)(f), \ov \phi (V)(g) \in \big( \C O/\inn{h} \big)(Z(h))$ near $\la \in \Om$. We remark that
\[
\wit f(\mu) = f(\phi(\mu)) \q \M{and} \q \wit g(\mu) = g(\phi(\mu))
\]
for all $\mu \in \Om \cap Z(h)$.

Furthermore, we notice that $(\phi, \ov \phi)$ induces isomorphisms
\[
\begin{split}
& \C O_{X,\phi(\mu)}/\inn{f_{\phi(\mu)} - w} \cong \C O_\mu/\inn{h,\wit f - w}_\mu \q \M{and} \\
& \C O_{X,\phi(\mu)} /\inn{g_{\phi(\mu)}} \cong \C O_\mu/\inn{h,\wit g}_\mu
\end{split}
\]
for all $\mu \in \Om \cap Z(h)$ and all $w \in \cc$. In particular, we have the identities
\[
m_\mu(h,\wit f - w) = m_{\phi(\mu)}(f - w) \q \M{and} \q
m_\la(h,\wit g) = m_x(g)
\]
for all $\mu \in \Om \cap Z(h)$ and all $w \in \cc$.

Let us now choose an $\ep > 0$ such that
\[
\big( Z(h, \wit f) \cup Z(h,\wit g) \big) \cap \B D_\ep^n(\la) \su \{\la\}
\q \M{and} \q
\B D_\ep(\la) \su \Om
\]
Thus, by Remark \ref{r:explocexp} and the above observations, we obtain that
\begin{equation}\label{eq:expexp}
\begin{split}
c_\la(h|_{\Om}; \wit f, \wit g) & = \lim_{w \to 0} \frac{(\wit f(\la) - w)^{m_\la(h,\wit g)}}
{\prod_{\mu \in Z(h) \cap \wit f^{-1}(\{w\}) \cap \B D_{\ep/2}^n(\la)} \wit g(\mu)^{m_\mu(h,\wit f - w)}  } \\
& = \lim_{w \to 0} \frac{(f(x) - w)^{m_x(g)}}
{\prod_{\mu \in Z(h) \cap \wit f^{-1}(\{w\}) \cap \B D_{\ep/2}^n(\la)} g(\phi(\mu))^{m_{\phi(\mu)}(f - w)}  } \\
& = \lim_{w \to 0} \frac{(f(x) - w)^{m_x(g)}}{\prod_{y \in \phi\big( Z(h) \cap \B D_{\ep/2}^n(\la) \big) \cap f^{-1}(\{w\}) } g(y)^{m_y(f - w)}  }
\end{split}
\end{equation}

This proves the statement of the proposition for the open neighborhood
\[
\Te := \phi\big( Z(h) \cap \B D_{\ep/2}^n(\la) \big) \su V
\]
of $x \in X$.

To prove the general statement, we now let $\Te \su V$ be an arbitrary open neighborhood of $x \in X$ such that the conditions $(1)$ and $(2)$ are satisfied. Since the limit in \eqref{eq:expexp} is independent of the choice of $\ep > 0$, we may assume that
\[
\phi\big( Z(h) \cap \B D_{\ep/2}^n(\la) \big) \su \Te.
\]
It then suffices to find a $\de > 0$ such that
\[
\Te \cap f^{-1}(\{w\}) \su \phi\big( Z(h) \cap \B D_{\ep/2}^n(\la) \big) \cap f^{-1}(\{w\})
\]
for all $w \in \B B_\de(0)$. But this property is satisfied with
\[
\de := \inf\big\{ |f(y)| \, \big | \, y \in \ov{\Te}\sem \phi\big( Z(h) \cap \B D_{\ep/2}^n(\la)\big) \big\}.
\]
\end{proof}

\begin{prop}
Let $V \su X$ be an open neighborhood of $x \in X$ and let $f_j, t \in \C O_X(V)$, $j = 1,2,3$, be local sections over $V$ such that
\[
\C O_{X,x}/\inn{(f_j)_x} \, \, , \, \, \, \C O_{X,x}/\inn{t_x} \, \, \M{ and } \, \, \,
\C O_{X,x}/\inn{1 - t_x}
\]
are finite dimensional vector spaces over $\cc$. Then the Tate tame symbol satisfies the properties
\begin{enumerate}
\item $c_x(X;f_1,f_2) = c_x(X;f_2,f_1)^{-1}$;
\item $c_x(X;f_1,f_2 f_3) = c_x(X;f_1,f_2) \cd c_x(X;f_1,f_3)$;
\item $c_x(X;t,1-t) = 1$.
\end{enumerate}
\end{prop}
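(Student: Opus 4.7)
The plan is to derive all three properties from the explicit limit formula for $c_x(X;f,g)$ established in the previous proposition, together with the antisymmetry of joint torsion. Throughout, for each invocation of $c_x(X; \cdot, \cdot)$, the open neighborhood $\Te \su V$ is chosen small enough that the hypotheses of that proposition are satisfied simultaneously for every pair of functions entering the argument.

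For the bimultiplicativity (2), I would expand $c_x(X; f_1, f_2 f_3)$ via the limit formula. The numerator becomes $(f_1(x) - w)^{m_x(f_2) + m_x(f_3)}$ using the additivity $m_x(f_2 f_3) = m_x(f_2) + m_x(f_3)$, which holds in the one-dimensional local ring $\C O_{X,x}$ via the short exact sequence $0 \to \C O_{X,x}/\inn{f_3} \to \C O_{X,x}/\inn{f_2 f_3} \to \C O_{X,x}/\inn{f_2} \to 0$ induced by multiplication by $f_2$ (valid as soon as $f_2$ is a non-zero-divisor, which is automatic since the quotient $\C O_{X,x}/\inn{f_2}$ is assumed finite-dimensional). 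The denominator factors as $\prod_y f_2(y)^{m_y(f_1 - w)} \cdot \prod_y f_3(y)^{m_y(f_1 - w)}$ through $(f_2 f_3)(y) = f_2(y) f_3(y)$. Since each of the two resulting limits exists by the previous proposition, their product splits as $c_x(X; f_1, f_2) \cdot c_x(X; f_1, f_3)$.

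For the Steinberg relation (3), I would split into cases on the value $t(x) \in \cc$. If $t(x) \notin \{0, 1\}$, both $m_x(t)$ and $m_x(1-t)$ vanish, so the numerator is $1$; moreover $t^{-1}(\{w\}) \cap \Te$ is empty for $w$ near $0$ once $\Te$ is small enough, so the denominator is also $1$. If $t(x) = 0$, the numerator $(t(x) - w)^{m_x(1-t)} = (-w)^0 = 1$ and each $y \in t^{-1}(\{w\}) \cap \Te$ contributes $(1 - t)(y) = 1 - w$ to the denominator, giving $(1 - w)^{\sum_y m_y(t - w)}$. By conservation of local multiplicity, $\sum_{y \in t^{-1}(\{w\}) \cap \Te} m_y(t - w) = m_x(t)$ for every sufficiently small $w$, so the denominator tends to $1$ as $w \to 0$. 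The case $t(x) = 1$ is symmetric: the fiber $t^{-1}(\{w\}) \cap \Te$ is empty for small $w$, while the numerator $(1 - w)^{m_x(1-t)}$ tends to $1$.

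For the antisymmetry (1), the limit formula is visibly asymmetric in $f_1$ and $f_2$, so I would instead appeal to the identification of $c_x(X; f_1, f_2)$ with a joint torsion of Toeplitz operators that is implicit in the proof of Theorem \ref{t:joystalk}. Writing $A$ for the scaled polydisc Toeplitz tuple on $H^2(\B D^n)$ and choosing lifts $(\wit h, \wit f_1, \wit f_2)$ as in Definition \ref{d:tatsym}, one has $c_x(X; f_1, f_2) = \T{JT}\bigl( K(\wit h, H^2(\B D^n)); \wit f_1(A), \wit f_2(A) \bigr)$. The transition identity $\tau_{i, j}(g(A)) = \tau_{j, i}(g(A))^{-1}$ recalled in Subsection \ref{ss:Torsion line bundle} (originating in \cite[Lemma 3.3.3]{Kaa:JTS}) then gives $c_x(X; f_1, f_2) \cdot c_x(X; f_2, f_1) = 1$, proving (1). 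The main obstacle will be the conservation-of-multiplicity input used in (3): one must verify that on a sufficiently small $\Te$ the holomorphic map $t|_\Te$ is finite onto a neighborhood of $0 \in \cc$ of degree $m_x(t)$, so that the fiberwise sum is constant in $w$. This is standard once one works in a local model and invokes the Weierstrass preparation theorem, but it is the only place where the one-dimensionality of $(X, \C O_X)$ at $x$ is used in an essential way.
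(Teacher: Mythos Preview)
Your argument is correct and amounts to an explicit unpacking of the paper's one-sentence proof, which merely cites Definition~\ref{d:tatsym} and Theorem~\ref{t:joystalk}.

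One point to tighten in your proof of (2): the implication ``$\C O_{X,x}/\inn{(f_2)_x}$ finite-dimensional $\Rightarrow$ $f_2$ is a non-zero-divisor'' is not valid in an arbitrary one-dimensional Noetherian local ring, since there can be embedded associated primes. It does hold here because Assumption~\ref{a:locmod} forces $\C O_{X,x} \cong \C O_\la/\inn{h}_\la$ to be Cohen--Macaulay: the $n-1$ germs $h_1,\ldots,h_{n-1}$ cut the $n$-dimensional regular local ring $\C O_\la$ down to dimension one, hence form a regular sequence, and the quotient of a regular ring by a regular sequence is Cohen--Macaulay. With that in place your short exact sequence is exact and the additivity $m_x(f_2 f_3) = m_x(f_2) + m_x(f_3)$ follows. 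The conservation-of-multiplicity input you flag in (3) can be obtained exactly as you suggest, by passing to the local model and invoking either the Weierstrass preparation theorem or, within the paper's framework, homotopy invariance of the Fredholm index of the Toeplitz tuple together with Theorem~\ref{t:locind}.
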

\begin{proof}
This is an easy consequence of the definition of the Tate tame symbol, see Definition \ref{d:tatsym} and Theorem \ref{t:joystalk}.
\end{proof}
For completeness, let us compute the resulting formula in the case when $x\in X$ above is a regular point.
\begin{prop}
Let $X$ be a Riemann surface and let $f, g$ be holomorphic functions defined in a neighbourhood of $x \in X$. Then
\begin{equation}\label{eq:reg}
c_x(X;f,g)=(-1)^{m_x(f)m_x(g)}\lim_{w\rightarrow x}\frac{f(w)^{m_x(g)}}{g(w)^{m_x(f)}}.
\end{equation}

\end{prop}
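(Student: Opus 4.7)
The plan is to reduce to a computation in a local holomorphic coordinate and apply the explicit formula for $c_x(X; f, g)$ established earlier in this subsection. Since $x$ is a regular point of a Riemann surface, $\T{dim}_x(X) = 1$ and we may choose a local model with $n = 1$ and no defining equations $h_i$, so $Z(h) = U$ is an open subset of $\cc$ and the chart $(\phi, \ov \phi)$ furnishes a local holomorphic coordinate $z$ centered at $x$. Set $n := m_x(f)$, $m := m_x(g)$, and write $f(z) = z^n u(z)$ and $g(z) = z^m v(z)$ in this coordinate, with $u, v$ holomorphic and nonvanishing at $0$.

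The degenerate cases $n = 0$ or $m = 0$ follow immediately from the explicit formula: if $f(x) \neq 0$, the denominator is an empty product for all small $w$; if instead $f(x) = 0$ but $g(x) \neq 0$, the $n$ simple zeros of $f - w$ cluster at $x$ where $g \to g(x)$. In both cases the identity \eqref{eq:reg} is direct.

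The substantive case is $n, m \geq 1$. Since $f$ has an isolated critical point at $x$, for $w \neq 0$ small the $n$ solutions $z_1(w), \ldots, z_n(w)$ of $f = w$ near $0$ are simple, so the formula reads
\[
c_x(X; f, g) = \lim_{w \to 0} \frac{(-w)^m}{\prod_{j=1}^n g(z_j(w))}.
\]
Writing $g(z_j(w)) = z_j(w)^m v(z_j(w))$, the critical step is to extract the leading asymptotics of $\prod_j z_j(w)$ as $w \to 0$. I would factor $f(z) - w = \prod_{j=1}^n (z - z_j(w)) \cdot \wit E(z, w)$ on a small bidisc, with $\wit E$ holomorphic and nonvanishing satisfying $\wit E(z, 0) = u(z)$. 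Evaluating at $z = 0$ yields the relation $-w = (-1)^n \prod_j z_j(w) \cdot \wit E(0, w)$, hence
\[
\prod_j z_j(w) = (-1)^{n+1} \frac{w}{u(0)} + O(w^2).
\]
Raising this to the $m$-th power, combining with $\prod_j v(z_j(w)) \to v(0)^n$ and the numerator $(-w)^m$, one arrives at $c_x(X; f, g) = (-1)^{mn} u(0)^m/v(0)^n$, which matches the right-hand side $(-1)^{mn} \lim_{z \to 0} f(z)^m/g(z)^n = (-1)^{mn} u(0)^m/v(0)^n$ evaluated in the same coordinate.

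The main obstacle is correctly tracking the sign $(-1)^{m_x(f) m_x(g)}$: it emerges from raising the Vieta-produced factor $(-1)^{n+1}$ to the $m$-th power and combining with the $(-1)^m$ coming from $(-w)^m$, yielding $(-1)^{m - m(n+1)} = (-1)^{mn}$; everything else is a straightforward holomorphic limit computation.
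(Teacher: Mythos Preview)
Your argument is correct. It differs from the paper's in a meaningful way, so a brief comparison is in order.

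The paper does not compute the limit directly. Instead it observes that the right-hand side of \eqref{eq:reg} enjoys the same symbol properties (antisymmetry and bimultiplicativity) that were established for $c_x(X;\cdot,\cdot)$ in the preceding proposition. Writing $f = z^{m_0(f)}\phi$ and $g = z^{m_0(g)}\psi$ with $\phi(0),\psi(0)\neq 0$, both sides therefore reduce multiplicatively to the three elementary evaluations $c_0(\cc;z,z)=-1$, $c_0(\cc;\phi,z)=\phi(0)$, and $c_0(\cc;\phi,\psi)=1$, each of which is immediate from Remark \ref{r:explocexp}. This is shorter and leverages structure already in place.

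Your route is a direct asymptotic computation: you keep $f$ and $g$ intact, apply the explicit limit formula, and extract the product $\prod_j z_j(w)$ of the nearby preimages via the Weierstrass preparation factorization $f(z)-w = P(z,w)\,\wit E(z,w)$ (your $\prod_j(z-z_j(w))$ is exactly the Weierstrass polynomial $P$, and $\wit E(\cdot,0)=u$). The sign bookkeeping $(-1)^m \cdot (-1)^{-m(n+1)} = (-1)^{mn}$ is handled correctly. Your approach is self-contained and makes the appearance of the tame-symbol sign transparent without invoking bimultiplicativity; the paper's approach is more economical but relies on the previously proved symbol axioms. Both are valid; yours would be the natural one if the symbol properties had not yet been established.
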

\begin{proof}
Since the computation is local, we can just as well assume that $x=0\in \cc$ and $f$ and $g$ are two functions holomorphic in a neighbourhood of $0$. Hence they can be written in the form 
\[
f(z)=z^{m_0(f)}\phi (z)\mbox{ and } g(z)=z^{m_0(g)}\psi (z),
\]
where both $\phi (0)\neq 0$ and $\psi (0)\neq 0$. Then, since the formula \eqref{eq:reg} has the properties listed in the proposition above, the computation reduces to checking that
\[
c_0(\cc;z,z)=-1 \, ,\, \, c_0(\cc;\phi,z )=\phi (0) \, , \, \, \mbox{ and } \, \,
c_0(\cc;\phi,\psi) = 1. 
\]
But this is obvious by Remark \ref{r:explocexp}.
\end{proof}

\bibliographystyle{amsalpha-lmp}

\def\cprime{$'$} \def\cprime{$'$}
\providecommand{\bysame}{\leavevmode\hbox to3em{\hrulefill}\thinspace}
\providecommand{\MR}{\relax\ifhmode\unskip\space\fi MR }
\providecommand{\MRhref}[2]{%
  \href{http://www.ams.org/mathscinet-getitem?mr=#1}{#2}
}
\providecommand{\href}[2]{#2}

\end{document}